\documentclass[10pt,a4paper]{amsart}
\usepackage[left=3.5cm,right=3.5cm,top=3.5cm,bottom=3.5cm]{geometry}
\usepackage{mathtools}
\usepackage{amsmath,amssymb,amsthm}
\usepackage{hyperref}
\hypersetup{hidelinks}

\DeclareMathOperator{\End}{End}
\DeclareMathOperator{\Hom}{Hom}
\DeclareMathOperator{\Cent}{Cent}
\DeclareMathOperator{\Mat}{Mat}
\DeclareMathOperator{\rk}{rk}
\DeclareMathOperator{\enc}{enc}
\DeclareMathOperator{\wt}{wt}

\theoremstyle{definition}
\newtheorem{definition}{Definition}[section]
\newtheorem{algorithm}[definition]{Algorithm}

\theoremstyle{plain}
\newtheorem{theorem}[definition]{Theorem}
\newtheorem{conjecture}[definition]{Conjecture}
\newtheorem{proposition}[definition]{Proposition}
\newtheorem{corollary}[definition]{Corollary}
\newtheorem{lemma}[definition]{Lemma}

\theoremstyle{remark}
\newtheorem{remark}[definition]{Remark}

\newcommand{\vF}{\mathbb F}
\newcommand{\p}{\mathfrak p}
\renewcommand{\epsilon}{\varepsilon}

\title{A General Construction of Codes from Drinfeld Modules}

\author[A. Giannoni]{Alessandro Giannoni}
\address{Dipartimento di Matematica e Applicazioni ``Renato Caccioppoli'',
Universit\`a degli Studi di Napoli Federico II,
Via Cintia, Monte S. Angelo, I-80126 Napoli, Italy}
\email{alessandro.giannoni@unina.it}

\author[G. Micheli]{Giacomo Micheli}
\address{Department of Mathematics \& Statistics,
University of South Florida,
Tampa, Florida, USA}
\email{gmicheli@usf.edu}

\author[M. Papikian]{Mihran Papikian}
\address{Department of Mathematics, Pennsylvania State University,
University Park, Pennsylvania, United States of America}
\email{papikian@psu.edu}

\subjclass[2020]{Primary 94B05; Secondary 94B35, 11G09, 16S36}

\keywords{Drinfeld modules, rank-metric codes, sum-rank-metric codes,
MRD codes, MSRD codes, skew polynomial rings, decoding}

\begin{document}

\begin{abstract}
We construct additive rank-metric and sum-rank-metric codes from Drinfeld
modules by restricting bounded-degree morphisms to prime-to-characteristic
torsion. For supersingular Drinfeld modules of rank $r$ in characteristic
$\mathfrak p$ of degree $d$, the stabilization formula for morphism spaces
yields rank-metric codes of $\vF_q$-dimension $mrt-c$ and minimum distance
$r-t+1$, where $c=r(r-1)(d-1)/2$. Simultaneous restriction to $\ell$
distinct degree-$m$ torsion modules gives additive sum-rank codes of the same
dimension and minimum distance at least $\ell r-t+1$. Their normalized
Singleton defects tend to zero, while in characteristic $(T)$ the module
$\phi_T=\tau^r$ makes the defect vanish and produces an explicit MSRD family.
We identify this family with a skew Chinese remainder theorem code supported
on central skew polynomials and prove that its poly-skew weight is exactly
$m$ times its sum-rank weight. This gives a specialized Singleton-type bound
and a polynomial-time unique decoder up to the full sum-rank unique-decoding
radius. We also derive a Welch-Berlekamp-type filter equation for the general
supersingular sum-rank construction; it becomes an effective decoder whenever
bases of the relevant morphism spaces and the restriction maps are
computable.
\end{abstract}

\maketitle

\section{Introduction}

Rank-metric codes were introduced independently, in two equivalent
formalisms, by Delsarte and Gabidulin \cite{Del78,Gab85}.  Delsarte studied
subsets of matrix spaces through the association scheme of bilinear forms,
whereas Gabidulin used vectors over an extension field and their rank over the
base field.  Both viewpoints lead to a Singleton-type bound, and codes
attaining it are called maximum rank distance, or MRD, codes.  The classical
Gabidulin construction evaluates $q$-linearized polynomials of bounded
$q$-degree at points that are linearly independent over the base field.  The
fact that the root space of a nonzero linearized polynomial is controlled by
its $q$-degree gives both the minimum distance and, together with an
interpolation argument, the existence of MRD codes throughout the admissible
parameter range.  This root-space principle is also one of the main
ingredients of the constructions in the present paper.  Rank-metric codes
have since become central in random network coding, distributed storage, and
code-based cryptography; see \cite{SKK08,BartzEtAl22} for representative
applications and a broad survey.

For many years, Gabidulin codes and their generalized versions were the only
known linear MRD families available for general parameters.  A major change
came with Sheekey's twisted Gabidulin codes \cite{She16}, which provided
infinite families inequivalent to generalized Gabidulin codes.  This was
followed by additive variants \cite{OO17}, generalized twisted Gabidulin
codes \cite{LTZ18}, constructions arising from maximum scattered linear sets
\cite{CMPZ18}, and the Trombetti-Zhou family \cite{TZ19}.  Skew-polynomial
quotients give another unifying source of MRD codes and semifields, containing
several of the classical and twisted constructions as special cases
\cite{She20}.  These developments revealed particularly fruitful
connections among MRD codes, linearized and skew polynomials, finite
semifields, and scattered subspaces; an overview of this circle of ideas can
be found in \cite{She19}.

The sum-rank metric extends the rank metric from a single matrix to an ordered
collection of matrix blocks by adding their ranks.  It specializes to the
rank metric when there is one block and to the Hamming metric when all blocks
have size $1\times 1$.  The metric appeared naturally in multishot network
coding \cite{NU10} and was subsequently developed as a general coding-theoretic
framework in \cite{MtP18,MtP19,BGLR21,MSK22}.  Its Singleton-type bound
defines the class of maximum sum-rank distance, or MSRD, codes.  The theory of
supports, duality, anticodes, and generalized weights has by now been
developed in substantial generality; see, in particular,
\cite{MtP19,BGLR21,CampsEtAl22}.  MSRD codes play the same extremal role for
the sum-rank metric as MDS and MRD codes do for the Hamming and rank metrics,
respectively.

Linearized Reed-Solomon codes form the basic explicit MSRD family.  They were
introduced through evaluations of skew polynomials and proved to be MSRD over
arbitrary division rings in \cite{MtP18}.  They simultaneously recover
Reed-Solomon codes in the Hamming case and Gabidulin codes in the rank-metric
case.  Their use in reliable and secure multishot network coding, together
with an efficient decoding algorithm, was developed in \cite{MPK19}.
Further constructions have enlarged the available parameter
ranges.  Extended Moore matrices yield several MSRD families with smaller
field sizes \cite{MtP22Moore}; twisted linearized Reed-Solomon codes provide
sum-rank analogues of twisted Gabidulin and Trombetti-Zhou codes
\cite{Neri22}; and extension, product, and modification procedures give
doubly and triply extended families and codes with block sizes not covered by
the original construction \cite{MtP23Extended,MtP24New}.  A systematic
account of the theory, constructions, decoding methods, and applications of
sum-rank codes is given in \cite{MSK22}.

Chinese-remainder constructions for linearized and skew polynomials have also
been developed recently.  Linearized polynomial CRT codes were introduced in
\cite{GGR26} for the rank and sum-rank metrics, with an auxiliary composition
polynomial used to influence their minimum-distance properties.  Skew CRT
codes and the associated poly-skew metric were introduced in \cite{NR26}.  The explicit
characteristic-$(T)$ family considered in the present paper belongs to the
latter formalism for a distinguished collection of central moduli.  In this
case, unlike for general skew CRT supports, the poly-skew metric admits a
direct interpretation as a scaled sum-rank metric on matrix blocks.

We do not determine the sum-rank equivalence class of the explicit
characteristic-$(T)$ family relative to previously known linearized
Reed-Solomon or linearized polynomial CRT codes
\cite{MtP18,MPK19,GGR26}. Accordingly, our novelty claims concern its
Drinfeld-module realization, its identification with a central skew CRT code,
the exact poly-skew/sum-rank metric correspondence, and the resulting
specialized decoder and Singleton-type bound, rather than a new equivalence
class of MSRD codes.

Drinfeld modules offer a different arithmetic source of the same algebraic
features.  A Drinfeld $A$-module is encoded by a homomorphism from
$A=\vF_q[T]$ to a ring of twisted polynomials.  If $f\in A$ is a prime of
degree $m$ away from the $A$-characteristic and $\phi$ has rank $r$, then its
$f$-torsion is a free $A/(f)$-module of rank $r$, and therefore becomes an
$r$-dimensional vector space over $\vF_{q^m}$.  A morphism of Drinfeld modules
commutes with the $A$-action and consequently induces a linear map on these
torsion spaces.  This turns spaces of morphisms into matrix codes in a natural
way.  We refer to \cite{Pap23} for the arithmetic of Drinfeld modules and
their torsion.

The use of Drinfeld modules in rank-metric coding has developed recently.
Bastioni, Darwish, and Micheli used their arithmetic, together with the
Dirichlet theorem for polynomial arithmetic progressions, to construct
infinite families of optimal rank-metric codes with rank-locality
\cite{BDM24}.  Micheli and Papikian then introduced a general
restriction-to-torsion method based on linear spaces of endomorphisms
\cite{MP26Rank}.  Their work focused on semifield codes: it recast a
skew-polynomial construction of Sheekey in Drinfeld-module language and
produced new infinite families from Drinfeld modules over finite fields.
More recently, the same authors determined the dimensions of spaces of
bounded-degree morphisms between supersingular Drinfeld modules.  Put
$\vF_{\mathfrak p}\coloneqq A/\mathfrak p$.  If $\phi$ and $\psi$ are
supersingular of rank $r$ over $\overline{\vF}_{\mathfrak p}$, with
$d=\deg_T(\mathfrak p)$, and
$M_s(\phi,\psi)$ denotes the space of morphisms of $\tau$-degree at most $s$,
their stabilization theorem \cite[Theorem 1.1]{MP26Stable} gives
$$
\dim_{\vF_q}M_s(\phi,\psi)
=
r(s+1)-\frac{r(r-1)(d-1)}{2}
$$
throughout an explicit stable range.  They also conjectured the optimal
threshold \cite[Conjecture 3.10]{MP26Stable} and proved it in rank $2$
\cite[Theorem 1.2]{MP26Stable}.

The purpose of this paper is to develop the restriction-to-torsion viewpoint
beyond the full-rank, semifield setting and to place it in the sum-rank
metric.  Our contributions are as follows.

\begin{itemize}
\item We begin with two Drinfeld modules $\phi$ and $\psi$ of the same rank
$r$, not necessarily supersingular or equal. For a prime $f$ of degree $m$
away from the characteristic, restriction to $f$-torsion defines
$$
\rho_f:\Hom(\phi,\psi)
\longrightarrow
\Mat_{r\times r}(\vF_{q^m}).
$$
Every $\vF_q$-linear subspace $\mathcal M\subseteq\Hom(\phi,\psi)$ therefore
gives an additive rank-metric code. The rank of $\rho_f(u)$ is exactly $r$
minus the $\vF_{q^m}$-dimension of the kernel of $u$ on $\phi[f]$. Thus the
metric problem is reduced to controlling kernels of morphisms on torsion.

\item We apply this construction to the bounded-degree space
$M_{tm-1}(\phi,\psi)$ for supersingular modules in characteristic
$\mathfrak p$. Put
$$
c\coloneqq \frac{r(r-1)(d-1)}{2}.
$$
For $1\leq t<r$ and in the stable range, the resulting additive code has
parameters
$$
\dim_{\vF_q}\mathcal C=mrt-c,
\qquad
d_R(\mathcal C)=r-t+1.
$$
For distinct primes $\mathfrak q_1,\ldots,\mathfrak q_\ell$ of degree $m$,
simultaneous restriction of bounded-degree endomorphisms gives a code in
$\Mat_{r\times r}(\vF_{q^m})^\ell$.  The Chinese remainder decomposition of
the torsion turns the sum of the kernel dimensions in the blocks into a
global root count, and for $1\leq t\leq \ell r$ we obtain
$$
\dim_{\vF_q}\mathcal C=mrt-c,
\qquad
d_{SR}(\mathcal C)\geq \ell r-t+1.
$$
In both constructions the normalized additive Singleton defect tends to zero.

\item We specialize to characteristic $(T)$ and the explicit supersingular
module $\phi_T=\tau^r$. Here $c=0$, so the construction is MSRD. With
$R=\vF_{q^r}\{\tau\}$ and $Q_i=\mathfrak q_i(\tau^r)$, the polynomials $Q_i$
are central and define a skew CRT realization
$$
R/(Q_i)
\cong
\End_{\vF_{q^m}}\bigl(\phi[\mathfrak q_i]\bigr)
\cong
\Mat_{r\times r}(\vF_{q^m}).
$$
For this support we prove the exact metric identity
$$
\wt_{\mathrm{ps}}=m\wt_{SR}
$$
and the corresponding specialized Singleton-type bound.

\item For the general supersingular sum-rank construction, we derive a
Welch-Berlekamp-type filter equation.  Under two explicit dimension and
degree conditions, the equation has a nonzero solution and every nonzero
solution recovers the transmitted endomorphism.  This gives an
existence-and-uniqueness result in arbitrary finite characteristic.  It
becomes an effective decoder whenever bases of the relevant bounded-degree
endomorphism spaces and the corresponding restriction matrices can be
computed. For the characteristic-$(T)$ family, the skew CRT key equation gives
a polynomial-time
unique decoder for all sum-rank errors of weight at most
$$
\left\lfloor\frac{\ell r-t}{2}\right\rfloor.
$$
\end{itemize}

The sum-rank construction is naturally adapted to a multishot channel with a
global error budget.  We recall the usual correction criterion and compare a
joint MSRD strategy with protecting every shot by an independent MRD code.
For errors of total sum-rank at most $e$, the joint strategy has normalized
rate $1-2e/(\ell r)$, whereas the separated strategy has normalized rate
$1-2e/r$.  This elementary comparison explains why simultaneous evaluation
on several torsion modules is the appropriate coding model.

The paper is organized as follows.  Section~2 recalls rank-metric and
sum-rank-metric codes, discusses the multishot channel, and collects the
required background on Drinfeld modules.  Section~3 develops the general
restriction-to-torsion construction, applies the stabilization theorem to
bounded-degree morphism spaces, derives the asymptotically MRD and MSRD
families, and concludes with the exact MSRD construction in characteristic
$(T)$.  Section~4 derives a filter equation for the general supersingular
sum-rank construction, identifies the characteristic-$(T)$ family with a
skew CRT code, proves the equivalence of the poly-skew and sum-rank metrics
in this setting, and gives a polynomial-time bounded-distance decoder for the
explicit family.

\section{Background}

\subsection{Rank-Metric and Sum-Rank Codes}

We begin by recalling the rank-metric and sum-rank notions used throughout the paper. The rank metric goes back to Delsarte and Gabidulin, while the sum-rank metric was introduced in the context of multishot network coding and later developed as a general metric framework; see \cite{Del78,Gab85,NU10,MtP18,MtP19}. Throughout this subsection, ranks of matrices are taken over the ambient field.

\begin{definition}
Let $K$ be a field and let $m,n$ be positive integers. The \emph{rank distance} on $\Mat_{m\times n}(K)$ is
$$
d_R(X,Y)\coloneqq \rk(X-Y)
$$
for all $X,Y\in \Mat_{m\times n}(K)$.
\end{definition}

\begin{definition}
Let $K$ be a field. A \emph{rank-metric code} is a subset
$$
\mathcal C\subseteq \Mat_{m\times n}(K)
$$
equipped with the rank distance. If $\mathcal C$ is a nonzero $K$-linear subspace, its minimum rank distance is
$$
d_R(\mathcal C)
\coloneqq
\min\{\rk(X):0\neq X\in\mathcal C\}.
$$
\end{definition}

\begin{definition}
Let $K$ be a field and let $\mathcal C\subseteq \Mat_{m\times n}(K)$ be a nonzero $K$-linear rank-metric code. The \emph{rank-metric Singleton bound} is
$$
\dim_K(\mathcal C)\leq \max\{m,n\}\bigl(\min\{m,n\}-d_R(\mathcal C)+1\bigr).
$$
A rank-metric code attaining this bound is called a \emph{maximum rank distance code}, or an \emph{MRD code}.
\end{definition}

\begin{definition}
Let $K=\vF_{q^m}$ and let $\mathcal C\subseteq \Mat_{a\times b}(K)$. We say that $\mathcal C$ is an \emph{additive rank-metric code} if it is an $\vF_q$-linear subspace of $\Mat_{a\times b}(K)$. If $\mathcal C$ is nonzero, its minimum rank distance is
$$
d_R(\mathcal C)
\coloneqq
\min\{\rk(X):0\neq X\in\mathcal C\}.
$$
For an additive rank-metric code, the Singleton bound is
$$
\dim_{\vF_q}(\mathcal C)
\leq
m\max\{a,b\}\bigl(\min\{a,b\}-d_R(\mathcal C)+1\bigr).
$$
If equality holds, then $\mathcal C$ is called an \emph{additive MRD code}.
\end{definition}

\begin{definition}
Let $K$ be a field and let
$$
\mathcal V
\coloneqq
\Mat_{m_1\times n_1}(K)\times\cdots\times \Mat_{m_\ell\times n_\ell}(K).
$$
For
$$
X=(X_1,\ldots,X_\ell)\in\mathcal V,
$$
the \emph{sum-rank weight} of $X$ is
$$
\wt_{SR}(X)\coloneqq \sum_{i=1}^{\ell}\rk(X_i).
$$
The associated \emph{sum-rank distance} is
$$
d_{SR}(X,Y)\coloneqq \wt_{SR}(X-Y)
=
\sum_{i=1}^{\ell}\rk(X_i-Y_i).
$$
\end{definition}

\begin{definition}
Let $K$ be a field and let
$$
\mathcal V
=
\Mat_{m_1\times n_1}(K)\times\cdots\times \Mat_{m_\ell\times n_\ell}(K).
$$
A \emph{sum-rank code} is a subset
$$
\mathcal C\subseteq \mathcal V
$$
equipped with the sum-rank distance. If $\mathcal C$ is a nonzero $K$-linear subspace, its minimum sum-rank distance is
$$
d_{SR}(\mathcal C)
\coloneqq
\min\{\wt_{SR}(X):0\neq X\in\mathcal C\}.
$$
\end{definition}

\begin{definition}
Let $K$ be a field and let
$$
\mathcal V=\Mat_{r\times r}(K)^\ell.
$$
For a nonzero $K$-linear sum-rank code $\mathcal C\subseteq\mathcal V$, the \emph{sum-rank Singleton bound} is
$$
\dim_K(\mathcal C)\leq r\bigl(\ell r-d_{SR}(\mathcal C)+1\bigr).
$$
A sum-rank code attaining this bound is called a \emph{maximum sum-rank distance code}, or an \emph{MSRD code}.
\end{definition}

\begin{definition}
Let $K=\vF_{q^m}$ and let $\mathcal C\subseteq \Mat_{r\times r}(K)^\ell$. We say that $\mathcal C$ is an \emph{additive sum-rank code} if it is an $\vF_q$-linear subspace of $\Mat_{r\times r}(K)^\ell$. If $\mathcal C$ is nonzero, its minimum sum-rank distance is
$$
d_{SR}(\mathcal C)
\coloneqq
\min\{\wt_{SR}(X):0\neq X\in\mathcal C\}.
$$
For an additive sum-rank code, the Singleton bound is
$$
\dim_{\vF_q}(\mathcal C)
\leq
mr\bigl(\ell r-d_{SR}(\mathcal C)+1\bigr).
$$
If equality holds, then $\mathcal C$ is called an \emph{additive MSRD code}.
\end{definition}

For the rank-metric and sum-rank Singleton bounds, including their additive
forms, see \cite{Del78,MtP18,CampsEtAl22}.

In the constructions below, the resulting codes are generally additive: they are $\vF_q$-linear subspaces of matrix spaces over $\vF_{q^m}$, but need not be $\vF_{q^m}$-linear. We shall therefore use the additive versions of the rank-metric and sum-rank Singleton bounds.

\subsection{The Multishot Sum-Rank Channel}

The sum-rank metric is particularly well suited to multishot models, as in multishot network coding; see \cite{NU10,MPK19}.

Let $K$ be a field. We consider a multishot transmission in which a codeword is an $\ell$-tuple of matrices
$$
X=(X_1,\ldots,X_\ell)\in \Mat_{r\times r}(K)^\ell.
$$
The received word is
$$
Y=(Y_1,\ldots,Y_\ell),
\qquad
Y_i=X_i+E_i,
$$
where $E_i\in \Mat_{r\times r}(K)$ is the error occurring in the $i$-th shot. The \emph{bounded sum-rank error channel} with parameter $e$ is the channel in which the admissible error tuples satisfy
$$
\sum_{i=1}^{\ell}\rk(E_i)\leq e.
$$

The usual nearest-neighbor argument gives the following correction criterion.

\begin{proposition}
\label{prop:sum-rank-channel-correction}
Let $\mathcal C\subseteq \Mat_{r\times r}(K)^\ell$ be a sum-rank code, and define
$$
d_{SR}(\mathcal C)
\coloneqq
\min\{d_{SR}(X,X'):X,X'\in\mathcal C,\ X\neq X'\}.
$$
Then $\mathcal C$ corrects all errors in the bounded sum-rank error channel with parameter $e$ whenever
$$
2e<d_{SR}(\mathcal C).
$$
\end{proposition}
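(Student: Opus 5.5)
The plan is the standard nearest-neighbor argument, carried out directly with the sum-rank distance. First I will check that $d_{SR}$ is a genuine metric on $\Mat_{r\times r}(K)^\ell$. Nonnegativity, symmetry and definiteness are immediate from the corresponding properties of matrix rank: $\rk(X_i-Y_i)=0$ for all $i$ forces $X=Y$, and $\rk(-A)=\rk(A)$. The triangle inequality follows blockwise from subadditivity of rank, $\rk(A+B)\leq\rk(A)+\rk(B)$, applied to $A=X_i-Z_i$ and $B=Z_i-Y_i$, and then summed over $i=1,\ldots,\ell$.

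Next I fix the decoder. Given a received word $Y$, it outputs a codeword $\hat X\in\mathcal C$ minimizing $d_{SR}(\hat X,Y)$; such a minimizer exists whenever $\mathcal C$ is finite, which covers the case $K$ finite. Suppose $X\in\mathcal C$ was sent and $Y=X+E$ with $\wt_{SR}(E)=\sum_i\rk(E_i)\leq e$. Then $d_{SR}(X,Y)=\wt_{SR}(E)\leq e$. I will show that $X$ is the unique codeword within sum-rank distance $e$ of $Y$, which gives both correctness and uniqueness of the output. Take any $X'\in\mathcal C$ with $d_{SR}(X',Y)\leq e$. The triangle inequality gives
$$
d_{SR}(X,X')\leq d_{SR}(X,Y)+d_{SR}(Y,X')\leq 2e<d_{SR}(\mathcal C),
$$
so by the definition of $d_{SR}(\mathcal C)$ as a minimum over distinct pairs we get $X'=X$. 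Hence the minimum-distance decoder returns $X$.

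If the code is infinite, I would replace "nearest codeword" by "the unique codeword within distance $e$ of $Y$"; the argument above shows this codeword exists and is unique, so the decoder is well defined. Nothing in the argument is a real obstacle. The only points needing care are the blockwise subadditivity of rank and the fact that the proposition defines $d_{SR}(\mathcal C)$ over pairs of distinct codewords, so no linearity of $\mathcal C$ is used.
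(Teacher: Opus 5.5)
Your proposal is correct and follows the paper's argument: two codewords within sum-rank distance $e$ of the same received word would be at distance at most $2e<d_{SR}(\mathcal C)$ by the triangle inequality, so they must coincide. Your check that $d_{SR}$ is a metric, via blockwise subadditivity of rank, fills in a step the paper takes for granted. Your finiteness caveat is unnecessary: since $d_{SR}$ takes nonnegative integer values, a nearest codeword always exists.
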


\begin{proof}
Let $X,X'\in\mathcal C$ be distinct codewords, and suppose that a received word $Y$ is within sum-rank distance at most $e$ from both $X$ and $X'$. Then
$$
d_{SR}(X,X')
\leq
d_{SR}(X,Y)+d_{SR}(Y,X')
\leq 2e,
$$
contradicting $2e<d_{SR}(\mathcal C)$.
\end{proof}

We next compare joint coding across the shots with a separated strategy that protects each shot independently.

\begin{proposition}
\label{prop:joint-sum-rank-rate-advantage}
Assume that $\ell>1$ and let $e$ be a positive integer such that $2e<r$. Consider the bounded sum-rank error channel on $\Mat_{r\times r}(K)^\ell$ with parameter $e$. If one uses a joint MSRD code with minimum sum-rank distance $2e+1$, then its normalized rate is
$$
R_{SR}=1-\frac{2e}{\ell r}.
$$
On the other hand, if one uses $\ell$ independent one-shot MRD codes in $\Mat_{r\times r}(K)$, each with minimum rank distance $2e+1$, in order to correct every error tuple satisfying
$$
\sum_{i=1}^{\ell}\rk(E_i)\leq e,
$$
then the normalized rate is
$$
R_{\mathrm{sep}}=1-\frac{2e}{r}.
$$
In particular,
$$
R_{SR}-R_{\mathrm{sep}}
=
\frac{2e(\ell-1)}{\ell r}
>
0.
$$
\end{proposition}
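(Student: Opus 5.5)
The proof will be a direct computation of normalized rates in which the correction requirement is translated into a minimum-distance requirement via Proposition~\ref{prop:sum-rank-channel-correction}. Throughout, the normalized rate of a $K$-linear code $\mathcal C\subseteq\Mat_{r\times r}(K)^\ell$ is $\dim_K(\mathcal C)/(\ell r^2)$, which is $\dim$ divided by the dimension of the ambient space. In the additive setting one replaces $K$-dimension by $\vF_q$-dimension and the ambient dimension by $m\ell r^2$; the ratios are unchanged.

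\textbf{Joint scheme.} An MSRD code with $d_{SR}=2e+1$ attains the sum-rank Singleton bound with equality, so
$$\dim_K\mathcal C=r(\ell r-2e).$$
Dividing by $\ell r^2$ gives $R_{SR}=1-2e/(\ell r)$. By Proposition~\ref{prop:sum-rank-channel-correction}, since $2e<2e+1$, this code corrects every admissible error tuple. The code is meaningful because $2e+1\le r<\ell r$, which is what the hypotheses $2e<r$ and $\ell>1$ provide.

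\textbf{Separated scheme.} This is the only step requiring an argument, because one must justify why each shot needs rank distance $2e+1$ rather than something smaller. The point is that the whole error budget may fall on a single shot: the tuple $(E_1,0,\dots,0)$ with $\rk E_1=e$ is admissible. Each shot is decoded independently, so the $i$-th component code must correct every single-shot error of rank at most $e$, for each $i$. This is exactly the requirement $d_R=2e+1$ stated in the proposition. The condition $2e<r$ guarantees that this distance does not exceed $r$, so MRD codes with these parameters exist (for instance Gabidulin codes) and the rank-metric Singleton bound is attained. The bound gives
$$\dim_K=r\bigl(r-(2e+1)+1\bigr)=r(r-2e)$$
per shot. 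The product code therefore has dimension $\ell r(r-2e)$, and its rate is $R_{\mathrm{sep}}=1-2e/r$.

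\textbf{Comparison.} Subtracting the two rates gives
$$R_{SR}-R_{\mathrm{sep}}=\frac{2e}{r}-\frac{2e}{\ell r}=\frac{2e(\ell-1)}{\ell r}.$$
This is positive because $e\ge1$ and $\ell>1$.
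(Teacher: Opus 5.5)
Your proposal is correct and follows essentially the same route as the paper. In both, Singleton equality gives the joint code dimension $r(\ell r-2e)$; the admissible tuple concentrating rank $e$ in one shot forces each component code to have rank distance $2e+1$, hence dimension $r(r-2e)$; and the rates are then subtracted. Your side remark that such MRD codes exist (e.g.\ Gabidulin codes) is not needed, since the proposition presupposes the codes. It is also valid only for suitable fields $K$ (for instance finite fields), not for arbitrary $K$.
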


\begin{proof}
For the joint code, Proposition~\ref{prop:sum-rank-channel-correction} shows that correction of all errors of sum-rank weight at most $e$ is guaranteed by minimum sum-rank distance $2e+1$. An MSRD code in $\Mat_{r\times r}(K)^\ell$ with this distance has dimension
$$
r\bigl(\ell r-(2e+1)+1\bigr)
=
r(\ell r-2e).
$$
Since the ambient space has dimension $\ell r^2$ over $K$, its normalized rate is
$$
R_{SR}
=
\frac{r(\ell r-2e)}{\ell r^2}
=
1-\frac{2e}{\ell r}.
$$

For the separated strategy, the global constraint allows all the error to be concentrated in a single shot: for some $j$, one may have $\rk(E_j)=e$ and $E_i=0$ for all $i\neq j$. Hence each one-shot component code must correct rank errors of rank at most $e$, and therefore must have rank distance at least $2e+1$. Taking each component code to be MRD with minimum rank distance $2e+1$, its dimension is
$$
r\bigl(r-(2e+1)+1\bigr)
=
r(r-2e).
$$
Using $\ell$ such independent component codes gives total dimension $\ell r(r-2e)$ inside an ambient space of dimension $\ell r^2$. Thus
$$
R_{\mathrm{sep}}
=
\frac{\ell r(r-2e)}{\ell r^2}
=
1-\frac{2e}{r}.
$$
The strict inequality follows by subtracting the two rates.
\end{proof}

\begin{remark}
The comparison in Proposition~\ref{prop:joint-sum-rank-rate-advantage} is with a separated coding strategy on a genuinely multishot channel. If the physical channel instead allows one to send a single large matrix and the error is measured by ordinary rank, then the natural comparison is different. Put $N=r\ell$. A one-shot MRD code in $\Mat_{N\times N}(K)$ with minimum rank distance $2e+1$ corrects all rank errors of rank at most $e$ and has dimension
$$
N\bigl(N-(2e+1)+1\bigr)=N(N-2e).
$$
Its normalized rate is therefore
$$
R_{\mathrm{MRD}}
=
\frac{N(N-2e)}{N^2}
=
1-\frac{2e}{N}
=
1-\frac{2e}{r\ell}.
$$
This equals the normalized rate $R_{SR}$ of a Singleton-optimal sum-rank code in
$$
\Mat_{r\times r}(K)^\ell
$$
with minimum sum-rank distance $2e+1$. Thus the advantage of the sum-rank model is not that it improves on classical MRD codes for a single large rank-metric transmission, but rather that it matches the structure of a channel naturally split into $\ell$ shots with a global sum-rank error budget.
\end{remark}

MSRD codes, and codes whose dimension is close to the MSRD bound, are natural for this channel because they maximize, or nearly maximize, the normalized rate for a prescribed sum-rank correction capability.

\subsection{Drinfeld Modules}

We now recall the basic language of Drinfeld modules. We follow the standard notation for $A=\vF_q[T]$; for background, see \cite{Pap23}.

\begin{definition}
Let $q$ be a prime power, let $\vF_q$ be the finite field with $q$ elements, and put
$$
A\coloneqq \vF_q[T].
$$
An $A$-\emph{field} is a field $k\supseteq \vF_q$ equipped with an $\vF_q$-algebra homomorphism
$$
\gamma:A\longrightarrow k.
$$
The ideal $\ker(\gamma)$ is called the $A$-\emph{characteristic} of $k$. If $\ker(\gamma)=\{0\}$, then $k$ is said to have \emph{generic} $A$-characteristic. If $\ker(\gamma)\neq\{0\}$, then $k$ is said to have \emph{finite} $A$-characteristic, and we write
$$
\p\coloneqq \ker(\gamma).
$$
\end{definition}

Throughout the rest of the paper, when a nonzero ideal of $A$ has a unique monic generator, we also use the same symbol for that generator whenever it appears in a polynomial expression. Thus, for a finite $A$-characteristic $\p$, expressions such as $\deg_T(\p)$, $\phi_\p$, and $f\neq\p$ refer to the monic generator of $\p$.

The noncommutative polynomial ring below encodes additive polynomials and will be used to write Drinfeld module actions.

\begin{definition}
Let $k$ be a commutative $\vF_q$-algebra and let $\tau$ be an indeterminate. The ring of \emph{twisted polynomials} $k\{\tau\}$ consists of the polynomials
$$
\sum_{i=0}^n a_i\tau^i,
\qquad a_i\in k,
$$
with the usual addition and multiplication determined by
$$
(a\tau^i)(b\tau^j)=ab^{q^i}\tau^{i+j}
$$
for all $a,b\in k$ and all $i,j\geq 0$.
\end{definition}

Equivalently, twisted polynomials may be viewed as $q$-linearized polynomials under the correspondence $\tau^i\mapsto Z^{q^i}$.

\begin{definition}
Let $k$ be a commutative $\vF_q$-algebra. A polynomial
$$
L(Z)=\sum_{i=0}^r a_iZ^{q^i}\in k[Z]
$$
is called a $q$-\emph{linearized} polynomial. If $a_r\neq 0$, then $L$ has $q$-\emph{degree} $r$. The ring of $q$-linearized polynomials over $k$, with addition and composition, is denoted by $k\langle Z\rangle$.
\end{definition}

We can now define Drinfeld modules in terms of this twisted polynomial ring.

\begin{definition}
Let $k$ be an $A$-field with structure morphism $\gamma:A\rightarrow k$. A \emph{Drinfeld module} over $k$ is an $\vF_q$-algebra homomorphism
$$
\phi:A\longrightarrow k\{\tau\},
\qquad
a\longmapsto \phi_a,
$$
such that, for every $a\in A$,
$$
\phi_a=\gamma(a)+\sum_{i=1}^{n(a)}h_i(a)\tau^i
$$
with $h_i(a)\in k$, and such that $\phi$ is not equal to $\gamma$ viewed as a map $A\rightarrow k\subseteq k\{\tau\}$. If there exists an integer $r\geq 1$ such that
$$
\deg_\tau(\phi_a)=r\deg_T(a)
$$
for every nonconstant $a\in A$, then $\phi$ is said to have \emph{rank} $r$.
\end{definition}

Because $A$ is a polynomial ring in one variable, a Drinfeld module is determined by the image of $T$.

\begin{definition}
Let $\phi$ be a Drinfeld module over an $A$-field $k$. Since $A=\vF_q[T]$, the homomorphism $\phi$ is determined by $\phi_T$. Thus, if $\phi$ has rank $r$, then
$$
\phi_T=\gamma(T)+g_1\tau+\cdots+g_r\tau^r,
$$
where $g_i\in k$ and $g_r\neq 0$. Conversely, such a choice of $\phi_T$ determines a Drinfeld module of rank $r$.
\end{definition}

The homomorphism $\phi$ also gives an $A$-module structure on the additive group.

\begin{definition}
Let $\phi$ be a Drinfeld module over an $A$-field $k$. The additive group of $k$ becomes an $A$-module, denoted by ${}^{\phi}k$, through the action
$$
a\circ \beta \coloneqq \phi_a(\beta)
$$
for every $a\in A$ and every $\beta\in k$.
\end{definition}

The next definition fixes the convention on base change for morphisms.

\begin{definition}
Let $\phi,\psi$ be Drinfeld modules over an $A$-field $k$. A \emph{morphism} $u:\phi\rightarrow\psi$ over $k$ is an element $u\in k\{\tau\}$ such that
$$
u\phi_a=\psi_a u
$$
for every $a\in A$. The set of morphisms from $\phi$ to $\psi$ over $k$ is denoted by $\Hom_k(\phi,\psi)$. Let $\phi_{\overline{k}}$ and $\psi_{\overline{k}}$ denote the base changes of $\phi$ and $\psi$ to an algebraic closure $\overline{k}$. We write
$$
\Hom(\phi,\psi)\coloneqq \Hom_{\overline{k}}(\phi_{\overline{k}},\psi_{\overline{k}})
$$
for the group of geometric morphisms. The \emph{endomorphism ring} of $\phi$ over $k$ is
$$
\End_k(\phi)\coloneqq \Hom_k(\phi,\phi)
=
\Cent_{k\{\tau\}}(\phi(A)),
$$
and we put
$$
\End(\phi)\coloneqq \Hom(\phi,\phi).
$$
\end{definition}

We shall use the corresponding torsion modules as evaluation spaces for morphisms.

\begin{definition}
Let $\phi$ be a Drinfeld module over an $A$-field $k$, and let $a\in A$ be nonzero. An element
$$
\alpha\in \overline{k}
$$
is called an $a$-\emph{torsion point} of $\phi$ if
$$
\phi_a(\alpha)=0.
$$
The set of all $a$-torsion points of $\phi$ is denoted by
$$
\phi[a]\coloneqq \{\alpha\in\overline{k}:\phi_a(\alpha)=0\}.
$$
\end{definition}

\begin{definition}
Let $\phi$ be a Drinfeld module of rank $r$ over an $A$-field $k$ of finite $A$-characteristic $\p$, and let $d\coloneqq \deg_T(\p)$. The \emph{height} of $\phi$ is the integer $h$, with $1\leq h\leq r$, such that the first nonzero term of $\phi_\p$ has $\tau$-degree $hd$; equivalently,
$$
\phi_\p=\sum_{i=hd}^{rd} a_i\tau^i,
\qquad
a_{hd}\neq 0.
$$
If $k$ has generic $A$-characteristic, then $\phi$ is said to have height $0$. This notion is unchanged after extending the base field.
\end{definition}

The height gives the standard notion of supersingularity in finite $A$-characteristic.

\begin{definition}
Let $\phi$ be a Drinfeld module of rank $r$ over an $A$-field $k$ of finite $A$-characteristic $\p$. The Drinfeld module $\phi$ is called \emph{supersingular} if its height is equal to its rank, namely if
$$
h(\phi)=r.
$$
\end{definition}

Over finite fields there is also a distinguished endomorphism coming from the field Frobenius.

\begin{definition}
Let $\phi$ be a Drinfeld module of rank $r$ over a finite field $k=\vF_{q^n}$ containing $\vF_q$. The \emph{Frobenius endomorphism} of $\phi$ over $k$ is
$$
\pi\coloneqq \tau^n\in \End_k(\phi).
$$
Indeed, $\tau^n$ commutes with all coefficients in $k$, and hence with $\phi(A)$.
\end{definition}

\section{The Constructions}

We now explain how restriction of morphisms to prime-to-characteristic torsion produces additive rank-metric and sum-rank codes.

\subsection{From Morphisms to Rank-Metric Codes}

Let $\phi$ and $\psi$ be Drinfeld modules of the same rank $r$ over a field
$k\supseteq \vF_q$.  Let $f\in A$ be a monic irreducible polynomial of degree
$m$, prime to the $A$-characteristic of $k$.  Since $f$ is prime to the
$A$-characteristic, both $\phi_f$ and $\psi_f$ are separable, and
$$
    \phi[f]\cong (A/(f))^r\cong \vF_{q^m}^r,
    \qquad
    \psi[f]\cong (A/(f))^r\cong \vF_{q^m}^r.
$$
See \cite[Corollary 3.5.3]{Pap23}.

The compatibility of morphisms with the $A$-action implies that they preserve torsion in the expected way.

\begin{proposition}
Let $u\in \Hom(\phi,\psi)$. Then $u$ maps $\phi[f]$ into $\psi[f]$.
\end{proposition}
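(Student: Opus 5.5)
The plan is to use the defining relation of a morphism directly, together with the fact that twisted polynomials act on $\overline{k}$ by evaluation, with multiplication corresponding to composition. Since $u\in\Hom(\phi,\psi)$ is a geometric morphism, it lies in $\overline{k}\{\tau\}$ and satisfies $u\phi_a=\psi_a u$ for every $a\in A$, the base changes being given by the same twisted polynomials. We specialize this identity to $a=f$, obtaining $u\phi_f=\psi_f u$ in $\overline{k}\{\tau\}$.

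The one point needing a check is that evaluation is multiplicative, namely that $(PQ)(\alpha)=P(Q(\alpha))$ for $P,Q\in\overline{k}\{\tau\}$ and $\alpha\in\overline{k}$. This follows from the multiplication rule $(a\tau^i)(b\tau^j)=ab^{q^i}\tau^{i+j}$: the monomial $a\tau^i$ acts as $x\mapsto ax^{q^i}$, so $(a\tau^i)(b\tau^j)$ acts as $x\mapsto a(bx^{q^j})^{q^i}=ab^{q^i}x^{q^{i+j}}$. This matches the composition rule, and bilinearity extends it to arbitrary twisted polynomials. This is exactly the correspondence $\tau^i\mapsto Z^{q^i}$ with $q$-linearized polynomials under composition recalled in the background.

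Now take $\alpha\in\phi[f]$, so that $\phi_f(\alpha)=0$. Then
$$
\psi_f\bigl(u(\alpha)\bigr)=(\psi_f u)(\alpha)=(u\phi_f)(\alpha)=u\bigl(\phi_f(\alpha)\bigr)=u(0)=0,
$$
where the last equality uses that $u$ has no constant term in $Z$, so that it is additive and sends $0$ to $0$. Since $u(\alpha)\in\overline{k}$, this shows $u(\alpha)\in\psi[f]$.

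There is no real obstacle here. The only care needed is the convention that $\Hom(\phi,\psi)$ consists of morphisms over $\overline{k}$, and that evaluating at points of $\overline{k}$ is compatible with the ring structure of $\overline{k}\{\tau\}$. The same computation applied to all $a\in A$ shows $u(\phi_a(\alpha))=\psi_a(u(\alpha))$, so the induced map $\phi[f]\to\psi[f]$ is $A$-linear, hence $A/(f)$-linear. This is what makes $\rho_f$ land in $\vF_{q^m}$-linear maps, although it goes beyond the statement.
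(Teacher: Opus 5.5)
Your proof is correct and follows the paper's argument exactly: you specialize the morphism identity $u\phi_f=\psi_f u$ to a point $\alpha\in\phi[f]$ and conclude that $\psi_f(u(\alpha))=u(0)=0$. Your extra check that evaluation turns multiplication in $\overline{k}\{\tau\}$ into composition is a detail the paper leaves implicit, and it is sound.
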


\begin{proof}
If $y\in \phi[f]$, then $\phi_f(y)=0$. Since $u\in\Hom(\phi,\psi)$, we have $u\phi_f=\psi_f u$. Hence
$$
\psi_f(u(y))=u(\phi_f(y))=u(0)=0.
$$
Therefore $u(y)\in\psi[f]$.
\end{proof}

Thus restriction to $\phi[f]$ gives a natural homomorphism
$$
\rho_f:\Hom(\phi,\psi)\longrightarrow \Hom_{A/(f)}(\phi[f],\psi[f]).
$$
After choosing $A/(f)$-bases of $\phi[f]$ and $\psi[f]$, and using the identification $A/(f)\cong \vF_{q^m}$, we obtain a matrix representation, still denoted by
$$
\rho_f:\Hom(\phi,\psi)\longrightarrow \Mat_{r\times r}(\vF_{q^m}).
$$
The resulting matrices depend on the chosen bases, but a change of bases only acts by left and right multiplication by invertible matrices. Hence the rank metric parameters are independent of these choices.

This gives the basic rank-metric construction.

\begin{definition}
Let $\mathcal M\subseteq \Hom(\phi,\psi)$ be an $\vF_q$-linear subspace. The additive rank-metric code obtained from $\mathcal M$ and the pair $(\phi[f],\psi[f])$ is
$$
\mathcal C_{\mathcal M,f}\coloneqq \rho_f(\mathcal M)
\subseteq \Mat_{r\times r}(\vF_{q^m}).
$$
Equivalently, the associated restriction map is
$$
\enc:\mathcal M\longrightarrow \Mat_{r\times r}(\vF_{q^m}),
\qquad
u\longmapsto \left(u:\phi[f]\rightarrow\psi[f]\right).
$$
When $\rho_f|_{\mathcal M}$ is injective, this map may be viewed as an encoding map with message space $\mathcal M$; otherwise the corresponding message space is naturally $\mathcal M/\ker(\rho_f|_{\mathcal M})$.
\end{definition}

For $u\in\mathcal M$, the matrix $\rho_f(u)$ represents the $\vF_{q^m}$-linear map
$$
u:\phi[f]\cong \vF_{q^m}^r\longrightarrow\psi[f]\cong \vF_{q^m}^r.
$$
Therefore
$$
\rk_{\vF_{q^m}}(\rho_f(u))
=
r-\dim_{\vF_{q^m}}\ker\left(u|_{\phi[f]}\right).
$$
Consequently, if there exists an integer $t$ such that
$$
\dim_{\vF_{q^m}}\ker\left(u|_{\phi[f]}\right)\leq t
$$
for every nonzero $u\in\mathcal M$, then
$$
d_R(\mathcal C_{\mathcal M,f})\geq r-t.
$$

Thus the construction reduces the problem of constructing additive rank-metric codes to that of finding $\vF_q$-linear subspaces $\mathcal M\subseteq \Hom(\phi,\psi)$ whose nonzero elements have small kernels on a chosen torsion module. The construction does not require $\phi$ or $\psi$ to be supersingular. Supersingularity becomes useful as a source of large and computable spaces of morphisms, such as the bounded-degree spaces discussed in the next section, but the restriction-to-torsion construction itself applies to arbitrary Drinfeld modules.

\subsection{Bounded-Degree Isogeny Spaces}

To obtain explicit families, we filter geometric morphisms by their $\tau$-degree.

\begin{definition}
Let $\phi$ and $\psi$ be Drinfeld modules over an $A$-field $k$. Following Micheli and Papikian, we work with the geometric morphism space
$$
\Hom(\phi,\psi)\coloneqq \Hom_{\overline{k}}(\phi,\psi).
$$
For an integer $s\geq 0$, set
$$
M_s(\phi,\psi)
\coloneqq
\{0\}\cup\{u\in \Hom(\phi,\psi):u\neq 0,\ \deg_\tau(u)\leq s\}.
$$
This is an $\vF_q$-linear subspace of $\Hom(\phi,\psi)$. When $\phi=\psi$, we write
$$
M_s(\phi)\coloneqq M_s(\phi,\phi)
=
\{0\}\cup\{u\in \End(\phi):u\neq 0,\ \deg_\tau(u)\leq s\}.
$$
\end{definition}

The key input is the stabilization theorem of Micheli and Papikian
\cite[Theorem 1.1]{MP26Stable}, stated here in the form needed below.

\begin{theorem}
Let $\vF_{\p}\coloneqq A/\p$.  Let $\phi$ and $\psi$ be supersingular
Drinfeld modules of rank $r\geq 2$ over $\overline{\vF}_{\p}$, and let
$$
d\coloneqq \deg_T(\p).
$$
Then
$$
\dim_{\vF_q} M_s(\phi,\psi)
=
r(s+1)-\frac{r(r-1)(d-1)}{2}
$$
for every integer $s$ satisfying
$$
s\geq \frac{r^2(r-1)(d-1)}{2}.
$$
In particular,
$$
\dim_{\vF_q} M_s(\phi)
=
r(s+1)-\frac{r(r-1)(d-1)}{2}
$$
in the same range.
\end{theorem}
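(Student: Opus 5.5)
Since this theorem is quoted from \cite[Theorem 1.1]{MP26Stable}, the shortest route is to cite it. To reconstruct the argument, I would work with the structure of $\Hom(\phi,\psi)$ as a module over $\vF_q[\pi]$, where $\pi$ is a suitable power of Frobenius. Both $\phi$ and $\psi$ are supersingular over $\overline{\vF}_\p$, so both are isomorphic to modules defined over a finite field $\vF_{q^n}$ (for instance $\vF_{\p}^{(r)}$, the degree-$r$ extension of $\vF_\p$). After enlarging $n$, all geometric morphisms are defined over $\vF_{q^n}$. Moreover, some power $\pi=\tau^{N}$ lies in $\phi(A)$: for supersingular $\phi$ one has $\tau^{rd}$, up to a unit, equal to $\phi_\p$. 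Concretely, there is an $n$ with $\tau^{n}=\phi_{\p^{n/(rd)}}$ after twisting, and likewise for $\psi$.

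\emph{Step 1.} $\Hom(\phi,\psi)$ is a module over the maximal order $\mathcal O=\End(\phi)$ of the central division algebra $D$ of dimension $r^2$ over $F=\vF_q(T)$, ramified exactly at $\p$ and $\infty$. Since $D$ is a division algebra, $\Hom(\phi,\psi)$ is a projective $\mathcal O$-module of rank one, hence a free $A$-module of rank $r^2$. The degree function $\deg_\tau$ is $r$ times the valuation at $\infty$ of the reduced norm, divided by $r$; that is, $\deg_\tau(u)=\deg_T(\mathrm{Nrd}(u))$. So $M_s(\phi,\psi)$ is the set of lattice points of $\infty$-adic norm at most $q^s$ in a rank-$r^2$ $A$-lattice inside $D_\infty$.

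\emph{Step 2.} Apply Riemann--Roch for the noncommutative curve $X$, or equivalently a Riemann--Roch theorem for the coherent sheaf of $\mathcal O_X$-modules determined by the lattice $\Lambda=\Hom(\phi,\psi)$ and the $\infty$-adic filtration. Here $D_\infty$ is a division algebra of invariant $-1/r$. Its maximal order $\mathcal O_\infty$ has a uniformizer $\Pi$ with $\Pi^r=1/T$ and residue field $\vF_{q^r}$, and the $\tau$-degree filtration corresponds to $\Pi^{-s}\mathcal O_\infty$ steps with graded pieces $\vF_{q^r}$ of $\vF_q$-dimension $r$. This explains the slope $r$ in $r(s+1)$. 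The Euler characteristic is $\dim M_s=\chi=r(s+1)+\chi_0$, where $\chi_0$ is a constant depending only on the genus of the order. Using the discriminant of $\mathcal O$, whose local contribution at $\p$ is $r(r-1)d$ and at $\infty$ is $r(r-1)$, the genus formula for hereditary orders gives $\chi_0=-r(r-1)(d-1)/2$.

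\emph{Step 3.} Show that $H^1$ vanishes for $s$ in the stated range. This is the main obstacle. First I would try duality: $H^1$ is dual to $H^0$ of a twist by the dualizing module, and this vanishes once $s$ exceeds the maximal ``successive minimum'' of the dual lattice. The range $s\ge r^2(r-1)(d-1)/2$ is the bound obtained by a crude argument, namely $r^2$ successive minima whose total sum is controlled by the covolume $r(r-1)(d-1)/2$ plus normalizations. Each minimum is therefore at most that total, and beyond it every graded step contributes exactly $r$ dimensions.
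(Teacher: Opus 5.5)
Citing \cite[Theorem 1.1]{MP26Stable} is exactly what the paper does. It gives no argument of its own and takes the stated range from \cite[Proposition 3.19]{MP26Stable}.

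Your reconstruction has the right skeleton, and your Euler characteristic is correct when $\psi=\phi$. The reduced trace form has discriminant $\p^{r(r-1)}$ on $\mathcal O=\End(\phi)$ and $T^{-r(r-1)}$ on $\mathcal O_\infty$. So the rank-$r^2$ bundle $\mathcal E_s$ on $\mathbb P^1$ built from $\mathcal O$ and $\Pi^{-s}\mathcal O_\infty$ has degree $rs-r(r-1)(d+1)/2$, and $\chi(\mathcal E_s)=r(s+1)-r(r-1)(d-1)/2$. As a standalone proof, however, two steps are missing.

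\emph{First gap: the case $\psi\neq\phi$.} Here you transfer the discriminant of $\mathcal O$ to $\Lambda=\Hom(\phi,\psi)$. Local principality over a maximal order does not fix the covolume of $\Lambda$ relative to the filtration $\{\deg_\tau\le s\}$. What fixes it is the Deuring-type identity
$$
\dim_{\vF_q}\bigl(\End(\psi)/\Hom(\phi,\psi)\,w\bigr)=r\deg_\tau(w)\qquad(0\neq w\in\Hom(\psi,\phi)).
$$
Since $\deg_\tau(xw)=\deg_\tau(x)+\deg_\tau(w)$, right multiplication by $w$ identifies $M_s(\phi,\psi)$ with the elements of degree at most $s+\deg_\tau(w)$ in the left ideal $\Hom(\phi,\psi)w$. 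The identity then gives the same bundle degree as for $\End(\psi)$ at level $s$. Proving this needs a local computation: Tate modules away from $\p$, and the inseparable part at $\p$. It also needs the fact that supersingular modules over $\overline{\vF}_\p$ are isogenous. None of this is in your outline.

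\emph{Second gap: Step 3 is only a heuristic, and its duality version does not work as stated.} Elements of the trace dual can have negative degree, so ``each minimum is at most the total'' has no basis there. Run the argument on $\Lambda$ itself.
\begin{itemize}
\item If $u=\sum_{i\le s}u_i\tau^i\in M_s$, comparing top coefficients gives $g'_r u_s^{q^r}=g_r^{q^s}u_s$, where $g_r$ and $g'_r$ are the leading coefficients of $\phi_T$ and $\psi_T$. Hence $\dim_{\vF_q}M_s/M_{s-1}\le r$.
\item So $\operatorname{gr}\Lambda$ is a finitely generated, torsion-free graded module over $\vF_q[\bar T]$ with $\deg\bar T=r$. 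It is therefore free on homogeneous generators of degrees $e_1,\dots,e_{r^2}\ge 0$.
\item There are exactly $r$ generators in each residue class mod $r$: at most $r$ by the bound on graded pieces, and $r^2$ in total.
\item Lifting the generators gives an orthogonal $A$-basis, and
$$
\dim_{\vF_q}M_s=\sum_{j=1}^{r^2}\max\Bigl(0,\Bigl\lfloor\frac{s-e_j}{r}\Bigr\rfloor+1\Bigr).
$$
\item Compare with $\chi(\mathcal E_s)$ for $s\gg 0$; Serre vanishing applies because $\mathcal E_{s+r}=\mathcal E_s(1)$. This gives $\sum_j\lfloor e_j/r\rfloor=c$.
\item The formula $r(s+1)-c$ holds precisely when $s\ge\max_j e_j-r$.
\item Nonnegativity of the $e_j$ gives $\lfloor e_j/r\rfloor\le c$, hence $e_j\le rc+r-1$. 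So the formula holds for $s\ge rc-1$, which covers the stated range $s\ge rc$.
\end{itemize}
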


The stable range above is obtained from
\cite[Proposition 3.19]{MP26Stable}.  Micheli and Papikian further conjecture
that the same formula holds in a sharper range
\cite[Conjecture 3.10]{MP26Stable}.

\begin{conjecture}
Let $\vF_{\p}\coloneqq A/\p$.  Let $\phi$ and $\psi$ be supersingular
Drinfeld modules of rank $r\geq 2$ over $\overline{\vF}_{\p}$, and let
$d=\deg_T(\p)$.  Then
$$
\dim_{\vF_q} M_s(\phi,\psi)
=
r(s+1)-\frac{r(r-1)(d-1)}{2}
$$
for every integer $s\geq 0$ satisfying
$$
s\geq (r-1)(d-1)-1.
$$
\end{conjecture}

The conjectured range holds for $r=2$ by
\cite[Theorem 1.2]{MP26Stable}.

The following root-counting lemma isolates the common mechanism behind the
rank-metric and sum-rank constructions.

\begin{lemma}
\label{lem:torsion-root-count}
Let $\phi$ and $\psi$ be Drinfeld modules of the same rank over an $A$-field,
and, for $i=1,\ldots,\ell$, let $\mathfrak q_i$ be distinct monic
irreducible polynomials of degree $m$, prime to the $A$-characteristic. For
every nonzero $u\in\Hom(\phi,\psi)$,
$$
\sum_{i=1}^{\ell}
\dim_{\vF_{q^m}}
\ker\left(u|_{\phi[\mathfrak q_i]}\right)
\leq
\left\lfloor\frac{\deg_\tau(u)}{m}\right\rfloor.
$$
\end{lemma}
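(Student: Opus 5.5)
Let $u\neq 0$ and write $\kappa_i\coloneqq\dim_{\vF_{q^m}}\ker(u|_{\phi[\mathfrak q_i]})$. The plan is to count roots. The kernel of the additive polynomial attached to $u$ on all of $\overline{k}$ has $\vF_q$-dimension at most $\deg_\tau(u)$. This holds because $u=\sum_{j\le s}a_j\tau^j$ with $s=\deg_\tau(u)$ corresponds to the $q$-linearized polynomial $\sum a_jZ^{q^j}$ of degree $q^s$, and a nonzero polynomial of degree $q^s$ has at most $q^s$ roots. Its zero set is an $\vF_q$-subspace of $\overline{k}$, hence has $\vF_q$-dimension at most $s$. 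This holds whether or not $u$ is separable.

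Next I would assemble the torsion kernels into a single subspace. Since the $\mathfrak q_i$ are pairwise coprime, the torsion modules $\phi[\mathfrak q_i]$ are independent $\vF_q$-subspaces of $\overline{k}$. Concretely, the sum $\sum_i\phi[\mathfrak q_i]$ is direct inside $\phi[\mathfrak q_1\cdots\mathfrak q_\ell]$. To check this, suppose $x_1+\cdots+x_\ell=0$ with $x_i\in\phi[\mathfrak q_i]$. Apply $\phi_{g_j}$, where $g_j=\prod_{i\neq j}\mathfrak q_i$. This kills every $x_i$ with $i\neq j$, and it acts invertibly on $\phi[\mathfrak q_j]$ because $g_j$ is a unit modulo $\mathfrak q_j$. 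Hence $x_j=0$. The kernels $\ker(u|_{\phi[\mathfrak q_i]})$ are $\vF_{q^m}$-subspaces, that is $A/(\mathfrak q_i)$-submodules, of dimension $\kappa_i$. Each therefore has $\vF_q$-dimension $m\kappa_i$. Their direct sum lies in the root space of $u$, so
$$
m\sum_{i=1}^{\ell}\kappa_i\le \deg_\tau(u).
$$

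Finally, since the left side of the lemma is an integer, dividing by $m$ and taking floors gives the stated bound. The only point needing care is the directness of the sum, which is the Chinese-remainder step above. The rest is the standard linearized-polynomial root bound. I do not expect a real obstacle. The one subtlety is that the kernel is counted as an $\vF_q$-subspace of $\overline{k}$ and not with multiplicities, and this is exactly what the root bound gives.
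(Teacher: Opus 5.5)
Your proposal is correct and follows the same route as the paper: the kernels on the $\mathfrak q_i$-torsion form a direct sum inside the root space of $u$, which has $\vF_q$-dimension at most $\deg_\tau(u)$, so $m\sum_i\kappa_i\le\deg_\tau(u)$, and integrality gives the floor. Your check of directness by applying $\phi_{g_j}$ spells out the step that the paper cites only as the Chinese remainder decomposition of the torsion.
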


\begin{proof}
Put
$$
k_i\coloneqq
\dim_{\vF_{q^m}}
\ker\left(u|_{\phi[\mathfrak q_i]}\right).
$$
The Chinese remainder decomposition of the torsion identifies the direct sum
of these kernels with an $\vF_q$-subspace of the root space of $u$ of
dimension $m\sum_i k_i$. A nonzero twisted polynomial of $\tau$-degree
$\deg_\tau(u)$ has at most $q^{\deg_\tau(u)}$ roots in an algebraic closure.
Consequently,
$$
m\sum_{i=1}^{\ell}k_i\leq\deg_\tau(u),
$$
and the result follows because the left hand side divided by $m$ is an
integer.
\end{proof}

\subsection{An Asymptotically MRD Construction}

Let $\vF_{\mathfrak p}\coloneqq A/\mathfrak p$.  Let $\phi$ and $\psi$ be
supersingular Drinfeld modules of rank $r\geq 2$ over
$\overline{\vF}_{\mathfrak p}$, and let
$$
d\coloneqq\deg_T(\mathfrak p).
$$
Let $f\in A$ be a monic irreducible polynomial of degree $m$, with
$$
f\neq \mathfrak p.
$$
Since $f$ is prime to the $A$-characteristic, the torsion modules $\phi[f]$ and $\psi[f]$ are separable. Fix an integer $t$ such that
$$
1\leq t<r,
$$
and set
$$
s\coloneqq tm-1.
$$
Assume that $s$ lies in the stabilization range of the previous theorem. Equivalently, with
$$
c\coloneqq\frac{r(r-1)(d-1)}{2},
$$
we assume that
$$
tm-1\geq rc.
$$
Consider the $\vF_q$-linear space
$$
M_s\coloneqq M_s(\phi,\psi)
=
\{0\}\cup\{u\in \Hom(\phi,\psi):u\neq 0,\ \deg_\tau(u)\leq s\}.
$$
Restricting the elements of $M_s$ to the $f$-torsion gives an additive rank-metric code
$$
\mathcal C_{s,f}\coloneqq \rho_f(M_s)\subseteq \Mat_{r\times r}(\vF_{q^m}).
$$

The next proposition records the parameters obtained from this bounded-degree space.

\begin{proposition}
For the code $\mathcal C_{s,f}$, one has
$$
\dim_{\vF_q}\mathcal C_{s,f}=mrt-c
$$
and
$$
d_R(\mathcal C_{s,f})=r-t+1.
$$
In particular, $\mathcal C_{s,f}$ is exactly $c$ below the additive Singleton bound for its minimum rank distance.
\end{proposition}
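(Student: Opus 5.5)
The plan is to obtain the dimension from the stabilization theorem, the lower bound on the distance from Lemma~\ref{lem:torsion-root-count}, and the matching upper bound on the distance from the additive Singleton bound. The stable-range hypothesis $tm-1\geq rc$ is used twice: once to apply the theorem, and once to make the Singleton bound sharp.

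\textbf{Lower bound on the distance and injectivity.} Let $u\in M_s$ be nonzero, so $\deg_\tau(u)\leq tm-1$. Lemma~\ref{lem:torsion-root-count} with $\ell=1$ and $\mathfrak q_1=f$ gives
$$
\dim_{\vF_{q^m}}\ker\bigl(u|_{\phi[f]}\bigr)\leq\left\lfloor\frac{tm-1}{m}\right\rfloor=t-1.
$$
By the rank–kernel identity recorded after the definition of $\mathcal C_{\mathcal M,f}$,
$$
\rk\rho_f(u)\geq r-(t-1)=r-t+1.
$$
Since $t<r$, this rank is at least $2$, so $\rho_f(u)\neq 0$. Hence $\rho_f|_{M_s}$ is injective and $d_R(\mathcal C_{s,f})\geq r-t+1$.

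\textbf{Dimension.} By injectivity, $\dim_{\vF_q}\mathcal C_{s,f}=\dim_{\vF_q}M_s(\phi,\psi)$. The assumption $s=tm-1\geq rc=r^2(r-1)(d-1)/2$ puts $s$ in the stable range of the stabilization theorem. That theorem gives
$$
\dim_{\vF_q}\mathcal C_{s,f}=r(s+1)-c=rtm-c.
$$

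\textbf{Upper bound on the distance.} This is the only step that needs care, since the root count alone gives only an inequality. Apply the additive Singleton bound with $a=b=r$ and $\delta=d_R(\mathcal C_{s,f})$:
$$
mrt-c\leq mr(r-\delta+1),
$$
which rearranges to
$$
\delta\leq r-t+1+\frac{c}{mr}.
$$
To drop the fractional term, note that $rc\leq tm-1<rm$, because $t<r$. Hence $0\leq c<m$, so $0\leq c/(mr)<1/r<1$. Since $\delta$ is an integer, $\delta\leq r-t+1$. Combined with the lower bound, $d_R(\mathcal C_{s,f})=r-t+1$.

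\textbf{Singleton defect.} The additive Singleton bound at distance $r-t+1$ is $mr\cdot t$. Subtracting the actual dimension $mrt-c$ leaves a defect of exactly $c$.
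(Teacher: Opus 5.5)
Your proposal is correct and follows the paper's proof essentially step for step. The root-count lemma gives the lower bound and injectivity, the stabilization theorem gives the dimension, and the additive Singleton bound combined with $c<m$ (from $rc\leq tm-1$ and $t<r$) gives sharpness; the only cosmetic difference is that you use the integrality of $d_R$ directly, whereas the paper argues by contradiction from $d_R\geq r-t+2$.
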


\begin{proof}
For every nonzero $u\in M_s$, Lemma~\ref{lem:torsion-root-count}, applied
with $\ell=1$ and $\mathfrak q_1=f$, gives
$$
\dim_{\vF_{q^m}}\ker\left(u|_{\phi[f]}\right)
\leq
\left\lfloor\frac{tm-1}{m}\right\rfloor
=t-1.
$$
Therefore
$$
\rk_{\vF_{q^m}}\rho_f(u)
=
r-\dim_{\vF_{q^m}}\ker\left(u|_{\phi[f]}\right)
\geq r-t+1.
$$
Hence
$$
d_R(\mathcal C_{s,f})\geq r-t+1.
$$

We now compute the dimension. By the stabilization theorem,
$$
\dim_{\vF_q}M_s
=
r(s+1)-c
=
rtm-c.
$$
Moreover, the restriction map $\rho_f$ is injective on $M_s$. Indeed, if $0\neq u\in M_s$ were in the kernel of $\rho_f$, then $u|_{\phi[f]}=0$, so
$$
\dim_{\vF_{q^m}}\ker\left(u|_{\phi[f]}\right)=r.
$$
This contradicts the preceding bound, since $t<r$. Therefore $\rho_f$ is injective on $M_s$, and consequently
$$
\dim_{\vF_q}\mathcal C_{s,f}
=
\dim_{\vF_q}M_s
=
mrt-c.
$$

It remains to show that the lower bound on the minimum distance is sharp. Suppose, for a contradiction, that
$$
d_R(\mathcal C_{s,f})\geq r-t+2.
$$
Then the additive Singleton bound for rank-metric codes in $\Mat_{r\times r}(\vF_{q^m})$ gives
$$
\dim_{\vF_q}\mathcal C_{s,f}
\leq
mr\bigl(r-(r-t+2)+1\bigr)
=
mr(t-1).
$$
On the other hand, we have proved that
$$
\dim_{\vF_q}\mathcal C_{s,f}=mrt-c.
$$
The stabilization-range hypothesis gives
$$
tm-1\geq rc,
$$
and hence
$$
c\leq \frac{tm-1}{r}<\frac{tm}{r}.
$$
Since $t<r$, we have $c<m$, and in particular $c<mr$. Therefore
$$
mrt-c>mrt-mr=mr(t-1),
$$
contradicting the Singleton bound above. Hence
$$
d_R(\mathcal C_{s,f})=r-t+1.
$$

For this value of the minimum distance, the additive Singleton bound is
$$
\dim_{\vF_q}\mathcal C\leq mr\bigl(r-(r-t+1)+1\bigr)=mrt.
$$
Since
$$
\dim_{\vF_q}\mathcal C_{s,f}=mrt-c,
$$
the code $\mathcal C_{s,f}$ is exactly $c$ below the additive Singleton bound for its actual minimum rank distance.
\end{proof}

Thus, for fixed $r$, $t$, and $d$, the normalized Singleton defect is
$$
\frac{c}{mrt-c},
$$
which tends to zero as $m\to\infty$. In this normalization, the family is
asymptotically MRD, or equivalently asymptotically Singleton-optimal.

\subsection{An Asymptotically MSRD Construction}

Let $\vF_{\p}\coloneqq A/\p$.  Let $\phi$ be a supersingular Drinfeld module
of rank $r\geq 2$ over $\overline{\vF}_{\p}$, and let
$$
d\coloneqq \deg_T(\p).
$$
Assume that there exist at least $\ell$ monic irreducible polynomials of degree $m$ in $A$ different from $\p$, and choose distinct such polynomials
$$
\mathfrak q_1,\ldots,\mathfrak q_\ell\in A.
$$
Since each level $\mathfrak q_i$ is prime to the $A$-characteristic, the torsion modules $\phi[\mathfrak q_i]$ are separable. Set
$$
P\coloneqq \mathfrak q_1\cdots \mathfrak q_\ell.
$$
Since $P$ is prime to the $A$-characteristic, the prime-to-characteristic torsion structure gives
$$
\phi[P]\cong (A/P)^r.
$$
By the coprimality of the $\mathfrak q_i$'s, this identifies the $P$-torsion as
$$
\phi[P]\cong \bigoplus_{i=1}^{\ell}\phi[\mathfrak q_i],
\qquad
\phi[\mathfrak q_i]\cong (A/\mathfrak q_i)^r\cong \vF_{q^m}^r.
$$
For each $i$, we choose an isomorphism $A/\mathfrak q_i\cong \vF_{q^m}$ and an $A/\mathfrak q_i$-basis of $\phi[\mathfrak q_i]$. These choices allow us to represent the restrictions to the $\mathfrak q_i$-torsion as matrices in $\Mat_{r\times r}(\vF_{q^m})$. Changing these choices only applies rank-preserving transformations to the blocks, and hence does not affect the sum-rank parameters.

Fix an integer $t$ such that
$$
1\leq t\leq \ell r,
$$
and set
$$
s\coloneqq tm-1.
$$
Assume that $s$ lies in the stabilization range of the previous theorem, namely
$$
tm-1\geq \frac{r^2(r-1)(d-1)}{2}.
$$
Consider the $\vF_q$-linear space
$$
\mathcal M_s\coloneqq M_s(\phi)
=
\{0\}\cup\{u\in \End(\phi):u\neq 0,\ \deg_\tau(u)\leq s\}.
$$
Restricting the elements of $\mathcal M_s$ to the torsion modules $\phi[\mathfrak q_i]$ gives an additive sum-rank code
$$
\mathcal C_{s,\mathfrak q_1,\ldots,\mathfrak q_\ell}
\coloneqq
\left\{
\left(
u|_{\phi[\mathfrak q_1]},\ldots,u|_{\phi[\mathfrak q_\ell]}
\right)
:
u\in\mathcal M_s
\right\}
\subseteq
\Mat_{r\times r}(\vF_{q^m})^\ell.
$$

\begin{theorem}
The simultaneous restriction map is injective on $\mathcal M_s$, and the
resulting additive code satisfies
$$
\dim_{\vF_q}\mathcal C_{s,\mathfrak q_1,\ldots,\mathfrak q_\ell}
=mrt-c,
\qquad
d_{SR}(\mathcal C_{s,\mathfrak q_1,\ldots,\mathfrak q_\ell})
\geq\ell r-t+1,
$$
where $c=r(r-1)(d-1)/2$. If $d_{\mathrm{act}}$ denotes the actual minimum
sum-rank distance and
$$
\Delta_{\mathrm{act}}
\coloneqq
mr(\ell r-d_{\mathrm{act}}+1)-(mrt-c),
$$
then
$$
0\leq\Delta_{\mathrm{act}}\leq c.
$$
Consequently, for fixed $r$ and $d$,
$$
0\leq
\frac{\Delta_{\mathrm{act}}}{mrt-c}
\leq
\frac{c}{mrt-c}
\longrightarrow0
$$
whenever $mt\to\infty$. Thus the family is asymptotically MSRD in this
normalization, or equivalently asymptotically Singleton-optimal.
\end{theorem}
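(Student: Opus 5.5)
The argument follows the template of the rank-metric proposition, with Lemma~\ref{lem:torsion-root-count} applied to all $\ell$ levels at once. Fix a nonzero $u\in\mathcal M_s$ and put $k_i=\dim_{\vF_{q^m}}\ker(u|_{\phi[\mathfrak q_i]})$. Since $\deg_\tau(u)\leq tm-1$, the lemma gives
$$
\sum_{i=1}^{\ell}k_i\leq\left\lfloor\frac{tm-1}{m}\right\rfloor=t-1.
$$
Each block has rank $r-k_i$, so
$$
\wt_{SR}\bigl(u|_{\phi[\mathfrak q_1]},\ldots,u|_{\phi[\mathfrak q_\ell]}\bigr)=\ell r-\sum_i k_i\geq \ell r-t+1\geq 1.
$$
This single inequality does two things. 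Because the weight is positive, no nonzero $u$ restricts to the zero tuple, so the simultaneous restriction map is injective on $\mathcal M_s$. And because the map is additive, the weight bound is exactly $d_{SR}\geq\ell r-t+1$.

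For the dimension, injectivity lets us transfer the stabilization theorem (applicable since $tm-1$ is assumed to be in the stable range). It gives $\dim_{\vF_q}\mathcal C=\dim_{\vF_q}M_{tm-1}(\phi)=r\cdot tm-c$.

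For the defect, we use $d_{\mathrm{act}}\geq \ell r-t+1$ to get
$$
\Delta_{\mathrm{act}}\leq mr\bigl(\ell r-(\ell r-t+1)+1\bigr)-(mrt-c)=c.
$$
The lower bound $\Delta_{\mathrm{act}}\geq0$ is the additive sum-rank Singleton bound applied to $\mathcal C$ with its actual distance $d_{\mathrm{act}}$. The bound is quoted in the background section and is valid here because $d_{\mathrm{act}}\leq \ell r$ and $\mathcal C\neq0$. Nonvanishing of $\mathcal C$ follows from $mrt-c>0$: the stable range gives $tm-1\geq rc$, so $mrt>c$.

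The asymptotic statement follows by dividing by $mrt-c$. With $r,d$ fixed, $c$ is constant, and $mrt-c\to\infty$ as $mt\to\infty$.

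I expect no step to be a real obstacle. The only point needing care is the CRT root count behind the lemma: the kernels on the different $\phi[\mathfrak q_i]$ must form an internal direct sum inside $\ker u\subseteq\overline{\vF}_\p$. This holds because the torsion submodules for coprime levels intersect trivially, and the lemma already packages this fact. Unlike the rank-metric case, no exact-distance claim is made, so the Singleton comparison is needed only for the two-sided bound on $\Delta_{\mathrm{act}}$.
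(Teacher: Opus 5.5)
Your proof is correct and follows essentially the same route as the paper: the root-counting lemma gives $\sum_i k_i\leq t-1$, which yields both the sum-rank lower bound and injectivity (using $t\leq \ell r$), the stabilization theorem then gives the dimension $mrt-c$, and the additive Singleton bound together with $d_{\mathrm{act}}\geq \ell r-t+1$ gives $0\leq\Delta_{\mathrm{act}}\leq c$. Your extra checks that $\mathcal C\neq 0$ (via $tm-1\geq rc$) and that $d_{\mathrm{act}}\leq \ell r$ make explicit two points the paper leaves implicit, and they change nothing in the argument.
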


\begin{proof}
For $0\neq u\in\mathcal M_s$, put
$$
k_i(u)\coloneqq
\dim_{\vF_{q^m}}\ker\left(u|_{\phi[\mathfrak q_i]}\right).
$$
Lemma~\ref{lem:torsion-root-count} gives
$$
\sum_{i=1}^{\ell}k_i(u)
\leq
\left\lfloor\frac{tm-1}{m}\right\rfloor
=t-1.
$$
Therefore every nonzero codeword has sum-rank weight at least
$$
\sum_{i=1}^{\ell}(r-k_i(u))
\geq\ell r-t+1.
$$
Since $t\leq\ell r$, the same inequality excludes a nonzero element of
$\mathcal M_s$ restricting to zero. The restriction map is therefore
injective, and the stabilization theorem gives
$$
\dim_{\vF_q}\mathcal C_{s,\mathfrak q_1,\ldots,\mathfrak q_\ell}
=
\dim_{\vF_q}\mathcal M_s
=mrt-c.
$$
The additive Singleton bound gives $\Delta_{\mathrm{act}}\geq0$. On the
other hand, $d_{\mathrm{act}}\geq\ell r-t+1$ implies
$$
mr(\ell r-d_{\mathrm{act}}+1)\leq mrt,
$$
and hence $\Delta_{\mathrm{act}}\leq c$. The normalized inequalities and
their limit follow immediately.
\end{proof}

\subsection{An Explicit MSRD Construction in Characteristic
\texorpdfstring{$(T)$}{(T)}}

We now specialize the previous construction to a case where the Singleton
defect vanishes. Let $A=\vF_q[T]$ and let $k=\vF_{q^r}$, endowed with the
$A$-field structure for which $T$ acts as zero. Consider the Drinfeld module
$$
    \phi_T=\pi=\tau^r.
$$
Then $\phi$ has rank $r$, its $A$-characteristic is the ideal $(T)$, and
$$
    d=\deg_T(T)=1.
$$
In particular, $\phi$ is supersingular.
Moreover,
$$
    \End_k(\phi)=\End(\phi)=\vF_{q^r}\{\tau\},
$$
since a twisted polynomial $\sum_j a_j\tau^j$ commutes with $\tau^r$ if and
only if $a_j^{q^r}=a_j$ for every $j$.

For an integer $m\geq 1$, let
$$
I_q(m)
\coloneqq
\frac{1}{m}\sum_{j\mid m}\mu(j)q^{m/j}
$$
be the number of monic irreducible polynomials of degree $m$ in
$\vF_q[T]$, where $\mu$ denotes the Möbius function: $\mu(1)=1$, $\mu(j)=0$ if $j$ is
divisible by the square of a prime, and $\mu(j)=(-1)^k$ if $j$ is the
product of $k$ distinct primes. Hence the admissible
number of blocks satisfies
$$
1\leq\ell\leq
\begin{cases}
q-1, & m=1,\\
I_q(m), & m>1.
\end{cases}
$$
In particular, $I_q(m)\sim q^m/m$ as $m\to\infty$.

Fix any such $\ell$, and choose distinct monic irreducible polynomials
$$
\mathfrak q_1,\ldots,\mathfrak q_\ell\in A
$$
of degree $m$, none equal to $T$. Set
$$
    P\coloneqq \mathfrak q_1\cdots \mathfrak q_\ell.
$$
Since $P$ is prime to the $A$-characteristic, the prime-to-characteristic
torsion decomposes as
$$
    \phi[P]\cong \bigoplus_{i=1}^{\ell}\phi[\mathfrak q_i],
    \qquad
    \phi[\mathfrak q_i]\cong (A/\mathfrak q_i)^r\cong \vF_{q^m}^r.
$$
These summands will be the blocks for the sum-rank metric.

Fix an integer $t$ with
$$
    1\leq t\leq \ell r
$$
and set
$$
    s\coloneqq tm-1.
$$
Let
$$
    \mathcal M_s\coloneqq M_s(\phi)
    =
    \{0\}\cup
    \{u\in \End_k(\phi):u\neq 0,\ \deg_\tau(u)\leq s\}.
$$
Restricting endomorphisms to the torsion blocks gives the code
$$
    \mathcal C^{\mathrm{MSRD}}_{s,\mathfrak q_1,\ldots,\mathfrak q_\ell}
    \coloneqq
    \left\{
    \left(
    u|_{\phi[\mathfrak q_1]},\ldots,u|_{\phi[\mathfrak q_\ell]}
    \right)
    :
    u\in\mathcal M_s
    \right\}
    \subseteq
    \Mat_{r\times r}(\vF_{q^m})^\ell.
$$

\begin{theorem}
\label{thm:explicit-msrd-characteristic-t}
The code
$\mathcal C^{\mathrm{MSRD}}_{s,\mathfrak q_1,\ldots,\mathfrak q_\ell}$ is an
additive MSRD code. More precisely,
$$
    d_{SR}\left(
    \mathcal C^{\mathrm{MSRD}}_{s,\mathfrak q_1,\ldots,\mathfrak q_\ell}
    \right)
    =
    \ell r-t+1
$$
and
$$
    \dim_{\vF_q}
    \mathcal C^{\mathrm{MSRD}}_{s,\mathfrak q_1,\ldots,\mathfrak q_\ell}
    =
    mrt.
$$
\end{theorem}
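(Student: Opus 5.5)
The plan is to rerun the argument of the asymptotically MSRD theorem in this special setting, where the dimension of $\mathcal M_s$ can be computed directly. Then the Singleton bound closes the gap.

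\emph{Step 1 (dimension of the message space).} Since $\End(\phi)=\vF_{q^r}\{\tau\}$, the space $\mathcal M_s$ is just the set of twisted polynomials over $\vF_{q^r}$ of $\tau$-degree at most $s=tm-1$. Its $\vF_q$-dimension is therefore $r(s+1)=rtm$. This agrees with the stabilization formula for $d=1$, so $c=0$. It is valid for every $s\geq 0$, so no stable-range hypothesis is needed here.

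\emph{Step 2 (distance lower bound and injectivity).} For nonzero $u\in\mathcal M_s$, Lemma~\ref{lem:torsion-root-count} applies verbatim: $\phi$ is a Drinfeld module of rank $r$ and the $\mathfrak q_i$ are distinct monic irreducibles of degree $m$ prime to $T$. It gives
$$\sum_{i=1}^{\ell}\dim_{\vF_{q^m}}\ker\bigl(u|_{\phi[\mathfrak q_i]}\bigr)\leq\left\lfloor\frac{tm-1}{m}\right\rfloor=t-1.$$
Hence every nonzero codeword has sum-rank weight at least $\ell r-t+1\geq 1$, using $t\leq\ell r$. In particular the simultaneous restriction map is injective, so
$$\dim_{\vF_q}\mathcal C^{\mathrm{MSRD}}_{s,\mathfrak q_1,\ldots,\mathfrak q_\ell}=mrt,\qquad d_{SR}\geq\ell r-t+1.$$

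\emph{Step 3 (equality and MSRD).} Let $d_{\mathrm{act}}$ denote the actual minimum distance. The additive sum-rank Singleton bound gives
$$mrt\leq mr(\ell r-d_{\mathrm{act}}+1),$$
so $d_{\mathrm{act}}\leq\ell r-t+1$. Combined with Step 2 this gives equality, and then the Singleton bound is attained, so the code is additive MSRD.

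The only point requiring care is Step 1: one must check that the geometric endomorphisms coincide with $\vF_{q^r}\{\tau\}$. The paper has already shown this by the commutation condition $a_j^{q^r}=a_j$. One must also check that the root-count lemma's Chinese-remainder identification holds in characteristic $(T)$, which follows because $P$ is prime to $T$, so $\phi_P$ is separable. I do not expect a genuine obstacle beyond these checks.
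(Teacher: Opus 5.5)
Your proposal is correct and follows the paper's proof essentially step for step. The root-count lemma gives the bound $\ell r-t+1$ and injectivity, the $\vF_{q^r}$-basis $1,\tau,\ldots,\tau^s$ of $\mathcal M_s$ gives dimension $rtm$ without any stable-range hypothesis, and the additive sum-rank Singleton bound then forces equality and the MSRD property.
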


\begin{proof}
Let $0\neq u\in\mathcal M_s$, and write
$$
    k_i(u)\coloneqq
    \dim_{\vF_{q^m}}\ker\left(u|_{\phi[\mathfrak q_i]}\right).
$$
Lemma~\ref{lem:torsion-root-count} gives
$$
\sum_{i=1}^{\ell}k_i(u)
\leq
\left\lfloor\frac{tm-1}{m}\right\rfloor
=t-1.
$$
Therefore every nonzero codeword has sum-rank weight at least
$$
    \sum_{i=1}^{\ell}(r-k_i(u))
    =
    \ell r-\sum_{i=1}^{\ell}k_i(u)
    \geq
    \ell r-t+1.
$$

Since $t\leq \ell r$, the restriction map is injective on $\mathcal M_s$.
Indeed, a nonzero element restricting to the zero codeword would have
$\sum_i k_i(u)=\ell r$, contradicting the previous bound. Thus
$$
    \dim_{\vF_q}
    \mathcal C^{\mathrm{MSRD}}_{s,\mathfrak q_1,\ldots,\mathfrak q_\ell}
    =
    \dim_{\vF_q}\mathcal M_s.
$$
Since
$$
\End_k(\phi)=\vF_{q^r}\{\tau\},
$$
the space $\mathcal M_s$ is an $\vF_{q^r}$-vector space with basis
$$
1,\tau,\ldots,\tau^s.
$$
Therefore
$$
    \dim_{\vF_q}\mathcal M_s
    =
    r(s+1)
    =
    rtm.
$$

Let $d_{SR}$ denote the actual minimum sum-rank distance of the code.  The
additive sum-rank Singleton bound gives
$$
    mrt
    =
    \dim_{\vF_q}\mathcal C^{\mathrm{MSRD}}_{s,\mathfrak q_1,\ldots,\mathfrak q_\ell}
    \leq
    mr(\ell r-d_{SR}+1).
$$
Therefore $d_{SR}\leq\ell r-t+1$.  Together with the lower bound already
proved, this gives $d_{SR}=\ell r-t+1$.  Thus the code attains the additive
sum-rank Singleton bound and is an additive MSRD code.
\end{proof}

\section{Filter Equations and Skew CRT Decoding}

We first give a Welch-Berlekamp-type filter equation and the associated
decoder for the general supersingular sum-rank construction. We then specialize to
$\phi_T=\tau^r$, where the code has a native skew CRT realization, the
poly-skew and sum-rank metrics agree up to the factor $m$, and the skew CRT
key equation gives an effective decoder.

\subsection{A Filter Equation for the General Sum-Rank Construction}

Let $\vF_{\mathfrak p}\coloneqq A/\mathfrak p$. Let $\phi$ be a
supersingular Drinfeld module of rank $r\geq 2$ over
$\overline{\vF_{\mathfrak p}}$, put
$d\coloneqq\deg_T(\mathfrak p)$, and retain the notation of the asymptotically
MSRD construction in Section~3. Thus
$\mathfrak q_1,\ldots,\mathfrak q_\ell$ are distinct primes of degree $m$,
prime to $\mathfrak p$, and $s=tm-1$. Put
$$
P\coloneqq\prod_{i=1}^{\ell}\mathfrak q_i.
$$
For $i=1,\ldots,\ell$,
$$
\rho_i:\End(\phi)
\longrightarrow
\End_{A/(\mathfrak q_i)}\bigl(\phi[\mathfrak q_i]\bigr)
\cong
\Mat_{r\times r}(\vF_{q^m})
$$
is restriction to $\mathfrak q_i$-torsion. Write a received word as
$$
Y_i=\rho_i(u)+E_i,
\qquad
u\in M_s(\phi).
$$

The unknown $v$ below plays the role of an error filter. Ideally it
annihilates the image of every block error $E_i$; in that case
$\rho_i(v)Y_i=\rho_i(vu)$. Since neither the errors nor such a $v$ are
known, we introduce a second unknown $N$, representing the product $vu$, and
solve simultaneously for $(v,N)$. The resulting equations are linear over
$\vF_q$ after bases of the bounded-degree morphism spaces have been fixed.

\begin{proposition}
\label{prop:general-filter-equation}
Let $\epsilon\in\mathbb Z_{\geq 0}$. Suppose that
$\lambda\in\mathbb Z_{\geq 0}$ satisfies
$$
\dim_{\vF_q}M_\lambda(\phi)>mr\epsilon,
\qquad
\lambda+s<m(\ell r-\epsilon).
$$
If $\sum_i\rk_{\vF_{q^m}}(E_i)\leq\epsilon$, then the linear equations
$$
\rho_i(v)Y_i=\rho_i(N),
\qquad i=1,\ldots,\ell,
$$
have a nonzero solution
$(v,N)\in M_\lambda(\phi)\times M_{\lambda+s}(\phi)$, and every nonzero
solution satisfies $N=vu$. In particular, $v\neq0$ and $u$ is the right
quotient in this identity.
\end{proposition}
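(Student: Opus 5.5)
Existence comes from a dimension count, and uniqueness from the root-counting Lemma~\ref{lem:torsion-root-count} applied to $N-vu$. Both rest on the fact that each $\rho_i$ is a ring homomorphism $\End(\phi)\to\Mat_{r\times r}(\vF_{q^m})$, since restriction respects composition. Hence $\rho_i(vu)=\rho_i(v)\rho_i(u)$.

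\emph{Existence.} For each $i$, let $W_i\subseteq\phi[\mathfrak q_i]$ be the image of the error $E_i$, viewed as an $\vF_{q^m}$-linear map on $\phi[\mathfrak q_i]$. Its dimension is $e_i\coloneqq\rk(E_i)$, and $\sum_i e_i\leq\epsilon$. Requiring $\rho_i(v)$ to vanish on $W_i$ for every $i$ imposes $\vF_q$-linear conditions on $v\in M_\lambda(\phi)$. Vanishing on $W_i$ amounts to $r e_i$ equations over $\vF_{q^m}$, i.e.\ $m r e_i$ equations over $\vF_q$. The total number of conditions is therefore at most $mr\epsilon<\dim_{\vF_q}M_\lambda(\phi)$, so a nonzero $v\in M_\lambda(\phi)$ satisfies all of them. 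Put $N\coloneqq vu$. Then $\deg_\tau N\leq\lambda+s$ and $N\in\End(\phi)$, so $N\in M_{\lambda+s}(\phi)$. Moreover
\[
\rho_i(v)Y_i=\rho_i(v)\rho_i(u)+\rho_i(v)E_i=\rho_i(vu)=\rho_i(N).
\]
Here I should check the composition convention: whether $\rho_i(v)E_i$ means $v$ applied after $E_i$. With that convention, annihilating the image of $E_i$ is the right condition. If the paper's matrix convention were reversed, I would instead filter the kernel side. I will fix the convention so that $\rho_i(v)Y_i$ means first $Y_i$, then $v$, consistent with $\rho_i(vu)=\rho_i(v)\rho_i(u)$.

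\emph{Uniqueness.} Let $(v,N)$ be any nonzero solution and set $D\coloneqq N-vu\in M_{\lambda+s}(\phi)$. Then
\[
\rho_i(D)=\rho_i(v)Y_i-\rho_i(v)\rho_i(u)=\rho_i(v)E_i,
\]
which has rank at most $e_i$. So $\dim\ker(D|_{\phi[\mathfrak q_i]})\geq r-e_i$, and the kernels sum to at least $\ell r-\epsilon$. If $D\neq0$, Lemma~\ref{lem:torsion-root-count} gives $m(\ell r-\epsilon)\leq\deg_\tau D\leq\lambda+s$, contradicting the hypothesis $\lambda+s<m(\ell r-\epsilon)$. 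Hence $D=0$, i.e.\ $N=vu$.

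If $v=0$, then $N=0$, contradicting that the solution is nonzero; so $v\neq0$. Since $\overline{\vF}_{\p}\{\tau\}$ has no zero divisors and admits right division, $u$ is recovered as the unique right quotient of $N$ by $v$.

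The main obstacle is the existence step, specifically the bookkeeping of the linear conditions. One has to verify that vanishing of $\rho_i(v)$ on $W_i$ costs at most $m r e_i$ conditions over $\vF_q$. This holds because $\rho_i$ is $\vF_q$-linear and $\Hom_{\vF_{q^m}}(W_i,\phi[\mathfrak q_i])$ has $\vF_q$-dimension $m r e_i$. One also has to keep the left/right composition convention consistent with how $E_i$ enters $Y_i$.
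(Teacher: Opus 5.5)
Your proposal is correct and follows the paper's proof essentially step for step. Existence comes from the dimension count on $v\mapsto(\rho_i(v)|_{\operatorname{Im}E_i})_i$, whose target has $\vF_q$-dimension at most $mr\epsilon$, and uniqueness comes from the root-space bound applied to $N-vu$ on a subspace of $\phi[P]$ of $\vF_q$-dimension at least $m(\ell r-\epsilon)$. The only cosmetic difference is that you bound $\ker\rho_i(N-vu)$ via $\rk(\rho_i(v)E_i)\leq\rk(E_i)$ and cite Lemma~\ref{lem:torsion-root-count}, whereas the paper observes directly that $N-vu$ kills $\ker(E_i)$.
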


\begin{proof}
Put $U_i\coloneqq\operatorname{Im}(E_i)$ and
$\nu\coloneqq\sum_i\dim_{\vF_{q^m}}(U_i)\leq\epsilon$. The map
$$
M_\lambda(\phi)
\longrightarrow
\bigoplus_{i=1}^{\ell}
\Hom_{\vF_{q^m}}\bigl(U_i,\phi[\mathfrak q_i]\bigr),
\qquad
v\longmapsto\bigl(\rho_i(v)|_{U_i}\bigr)_i
$$
has a codomain of $\vF_q$-dimension $mr\nu\leq mr\epsilon$. Hence it has a
nonzero kernel element $v$, and $(v,vu)$ is a nonzero solution.

For an arbitrary nonzero solution set $Z\coloneqq N-vu$. If
$x\in\ker(E_i)$, the $i$-th filter equation gives $Z(x)=0$. Thus $Z$
vanishes on $\bigoplus_i\ker(E_i)\subseteq\phi[P]$, whose
$\vF_q$-dimension is at least
$$
m\sum_{i=1}^{\ell}
\left(r-\rk_{\vF_{q^m}}(E_i)\right)
\geq m(\ell r-\epsilon).
$$
Since $\deg_\tau(Z)\leq\lambda+s<m(\ell r-\epsilon)$, the root-space bound
forces $Z=0$. Hence $N=vu$, and $v\neq0$ because the solution is nonzero.
\end{proof}

The first condition on $\lambda$ ensures that a nonzero filter exists; the
second forces $Z=N-vu$ to vanish by the root-space bound. Thus $\lambda$
balances existence and uniqueness.

In the stabilization range this budget can be chosen explicitly. Set
$$
c\coloneqq\frac{r(r-1)(d-1)}{2},
\qquad
a\coloneqq\left\lfloor\frac{c}{r}\right\rfloor
=
\left\lfloor\frac{(r-1)(d-1)}{2}\right\rfloor.
$$
Taking $\lambda=m\epsilon+a$, the stabilization formula gives
$$
\dim_{\vF_q}M_\lambda(\phi)-mr\epsilon=r(a+1)-c>0,
$$
and the second hypothesis of Proposition~\ref{prop:general-filter-equation}
becomes
$$
m(2\epsilon+t)+a\leq m\ell r.
$$
In characteristic $(T)$, this reduces to $2\epsilon+t\leq\ell r$. The
choice $\lambda=m\epsilon+a$ is valid only when it belongs to the
stabilization range, namely when
$$
m\epsilon+a\geq \frac{r^2(r-1)(d-1)}{2}.
$$
For effectiveness, fix $\vF_q$-bases of $M_\lambda(\phi)$ and
$M_{\lambda+s}(\phi)$, matrices for the $\rho_i$, and algorithms for
finite-field and Ore arithmetic and membership testing. The filter equations
are then $\vF_q$-linear.

\begin{algorithm}
\label{alg:general-filter-decoder}
Fix a decoding radius $\epsilon\in\mathbb Z_{\geq 0}$ and a filter-degree
bound $\lambda\in\mathbb Z_{\geq 0}$ such that
$\dim_{\vF_q}M_\lambda(\phi)>mr\epsilon$ and
$\lambda+s<m(\ell r-\epsilon)$.

\emph{Input:} a received word $\boldsymbol Y=(Y_1,\ldots,Y_\ell)$ with
$Y_i\in\Mat_{r\times r}(\vF_{q^m})$.  \emph{Output:} a candidate
$\widehat u\in M_s(\phi)$ at sum-rank distance at most $\epsilon$ from
$\boldsymbol Y$, or failure.

\begin{enumerate}
\item \emph{Filter step.} Solve simultaneously
$$
\rho_i(v)Y_i=\rho_i(N),
\qquad i=1,\ldots,\ell,
$$
for the filter $v\in M_\lambda(\phi)$ and filtered numerator
$N\in M_{\lambda+s}(\phi)$. In the fixed bases this is a homogeneous linear
system over $\vF_q$. If its only solution is $(0,0)$, declare failure;
otherwise choose any nonzero solution and declare failure if $v=0$.

\item \emph{Message-recovery step.} Divide $N$ with $v$ on the left in the
ambient Ore-polynomial ring, obtaining
$$
N=v\widehat u+\Delta,
\qquad
\deg_\tau(\Delta)<\deg_\tau(v).
$$
Here $\widehat u$ is the quotient on the right. Declare failure if
$\Delta\neq0$ or $\widehat u\notin M_s(\phi)$.

\item \emph{Verification step.} Compute the residual sum-rank weight. Declare
failure if
$$
\sum_{i=1}^{\ell}
\rk_{\vF_{q^m}}\bigl(Y_i-\rho_i(\widehat u)\bigr)>\epsilon;
$$
otherwise return $\widehat u$.
\end{enumerate}
\end{algorithm}
\begin{corollary}
\label{cor:general-filter-radius}
Let $\lambda\in\mathbb Z_{\geq 0}$ satisfy $\lambda+s<m\ell r$, and set
$$
\epsilon_\lambda
\coloneqq
\min\left\{
\left\lfloor
\frac{\dim_{\vF_q}M_\lambda(\phi)-1}{mr}
\right\rfloor,
\left\lfloor
\frac{m(\ell r-t)-\lambda}{m}
\right\rfloor
\right\}.
$$
Then Algorithm~\ref{alg:general-filter-decoder}, with decoding radius
$\epsilon_\lambda$, corrects every error tuple of sum-rank weight at most
$\epsilon_\lambda$.

In characteristic $(T)$, taking
$$
\lambda
=
m\left\lfloor\frac{\ell r-t}{2}\right\rfloor
$$
gives
$$
\epsilon_\lambda
=
\left\lfloor\frac{\ell r-t}{2}\right\rfloor.
$$
Thus, in this case, the general filter decoder reaches the full
unique-decoding radius.
\end{corollary}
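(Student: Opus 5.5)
The plan is to reduce the corollary to Proposition~\ref{prop:general-filter-equation} and then check that Algorithm~\ref{alg:general-filter-decoder} outputs the transmitted $u$.

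\textbf{Step 1: the hypotheses hold with $\epsilon=\epsilon_\lambda$.} By the first term in the minimum, $mr\epsilon\leq\dim_{\vF_q}M_\lambda(\phi)-1$, so $\dim_{\vF_q}M_\lambda(\phi)>mr\epsilon$. By the second term, $m\epsilon\leq m(\ell r-t)-\lambda$. Using $s=tm-1$, this gives
$$
\lambda+s\leq m(\ell r-t)-m\epsilon+tm-1<m(\ell r-\epsilon).
$$
The assumption $\lambda+s<m\ell r$ is only needed to keep $\epsilon_\lambda$ meaningful. If the second floor is negative, then $\epsilon_\lambda<0$ and the statement is vacuous.

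\textbf{Step 2: the algorithm returns $u$.} Let $\sum_i\rk(E_i)\leq\epsilon$. By Proposition~\ref{prop:general-filter-equation}, the homogeneous system has a nonzero solution, and every nonzero solution has the form $(v,vu)$ with $v\neq0$. So the filter step does not fail.
\begin{itemize}
\item \emph{Division step.} Dividing $N=vu$ by $v$ on the left in $\overline{\vF}_{\p}\{\tau\}$ gives $\widehat u=u$ and $\Delta=0$, by uniqueness of left division. Since $u\in M_s(\phi)$, the membership test passes.
\item \emph{Verification step.} The residual is $(E_i)_i$, whose weight is at most $\epsilon$, so $u$ is returned.
\item \emph{No wrong output.} Any returned $\widehat u$ is a codeword within distance $\epsilon$ of $\boldsymbol Y$. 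By the minimum distance $\ell r-t+1>2\epsilon$ (from the asymptotic MSRD theorem) and Proposition~\ref{prop:sum-rank-channel-correction}, this codeword is unique.
\end{itemize}

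\textbf{Step 3: characteristic $(T)$.} Here $d=1$, so $c=0$. Take $\phi_T=\tau^r$, so that $\dim_{\vF_q}M_\lambda(\phi)=r(\lambda+1)$ for every $\lambda$, with no stable-range restriction; this follows from $\End(\phi)=\vF_{q^r}\{\tau\}$. Put $e=\lfloor(\ell r-t)/2\rfloor$ and $\lambda=me$.
\begin{itemize}
\item The first term is $\lfloor(r(me+1)-1)/(mr)\rfloor=\lfloor e+(r-1)/(mr)\rfloor=e$.
\item The second term is $\lfloor(m(\ell r-t)-me)/m\rfloor=\ell r-t-e\geq e$.
\end{itemize}
Hence $\epsilon_\lambda=e$. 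We still need $\lambda+s<m\ell r$, which reads $me+tm-1<m\ell r$. This holds because $e+t\leq\ell r$.

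The only delicate point is Step 2: ensuring the division step recovers exactly $u$ rather than some other quotient, and that the membership test is compatible. This follows from uniqueness of left Euclidean division in the twisted polynomial ring. That ring has no zero divisors, since leading coefficients multiply as $ab^{q^i}\neq0$.
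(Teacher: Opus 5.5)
Your proposal is correct and follows the paper's own argument: the two floors in $\epsilon_\lambda$ give exactly the two hypotheses of Proposition~\ref{prop:general-filter-equation} (using $s=tm-1$), and in characteristic $(T)$ the formula $\dim_{\vF_q}M_\lambda(\phi)=r(\lambda+1)$ evaluates the minimum. Your step through the division and verification stages spells out what the paper leaves implicit; the ``no wrong output'' bullet, however, is redundant, since the earlier bullets already force the output to be $u$, and it asserts $2\epsilon_\lambda<\ell r-t+1$ without proof (this inequality does hold, because the general bound $\dim_{\vF_q}M_\lambda(\phi)\leq r(\lambda+1)$ gives $m\epsilon_\lambda\leq\lambda$).
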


\begin{proof}
The definition of $\epsilon_\lambda$ gives
$$
\dim_{\vF_q}M_\lambda(\phi)>mr\epsilon_\lambda
$$
and, since $s=tm-1$,
$$
\lambda+s<m(\ell r-\epsilon_\lambda).
$$
The claim therefore follows from
Proposition~\ref{prop:general-filter-equation}. In characteristic $(T)$ one
has
$$
\dim_{\vF_q}M_\lambda(\phi)=r(\lambda+1),
$$
and the stated choice of $\lambda$ gives the final equality.
\end{proof}
 With the effective data fixed, the algorithm is polynomial:
it uses $\vF_q$-linear algebra followed by Ore-polynomial arithmetic. This is
a conditional effective statement, not a uniform algorithm for arbitrary
supersingular Drinfeld modules given without such data.

\begin{remark}
Let $(\phi,\psi)$ be as in the bounded-degree rank-metric construction and
let $\epsilon\in\mathbb Z_{\geq 0}$. Suppose
$$
Y=\rho_f^{\phi,\psi}(u)+E,
\qquad
u\in M_s(\phi,\psi),
\qquad
\rk_{\vF_{q^m}}(E)\leq\epsilon.
$$
If
$$
\dim_{\vF_q}M_\lambda(\psi)>mr\epsilon,
\qquad
\lambda+s<m(r-\epsilon),
$$
one solves
$$
\rho_f^\psi(v)Y=\rho_f^{\phi,\psi}(N),
\qquad
(v,N)\in M_\lambda(\psi)\times M_{\lambda+s}(\phi,\psi).
$$
The same proof gives $N=vu$ for every nonzero solution, and the same
division-and-verification procedure decodes the bounded-degree rank-metric
codes whenever the required bases and restriction matrices are effective.
\end{remark}

\subsection{Skew CRT Realization}

We use throughout the notation of our characteristic-$(T)$ construction and
formulate the skew CRT construction of \cite{NR26} directly in this setting.
Put
$$
R=\vF_{q^r}\{\tau\},
\qquad
\tau\alpha=\alpha^q\tau
\quad(\alpha\in\vF_{q^r}),
\qquad
Q_i=\mathfrak q_i(\tau^r),
$$
$$
\Gamma=(Q_1,\ldots,Q_\ell),
\qquad
\Pi=P(\tau^r)=\prod_{i=1}^{\ell}Q_i,
\qquad
\mathsf N\coloneqq\deg_\tau(\Pi)=m\ell r.
$$
The ring $R$ is Euclidean on both sides. If $F,G\in R$, with $G$ monic and
nonzero, right Euclidean division is the unique decomposition
$$
F=BG+H,
\qquad
\deg_\tau(H)<\deg_\tau(G).
$$
We write $H=\operatorname{rem}_r(F,G)$. Thus the left-module quotient
$R/RG$ records right residues modulo $G$. For nonzero $F,G\in R$, we use
monic greatest common right divisors and least common left multiples, with
$$
\operatorname{gcrd}(F,G)=B_1F+B_2G,
\qquad
\operatorname{lclm}(F,G)=C_1F=C_2G,
$$
$$
\deg_\tau\operatorname{lclm}(F,G)
=
\deg_\tau(F)+\deg_\tau(G)
-\deg_\tau\operatorname{gcrd}(F,G).
$$
Multiplication in $R$ corresponds to composition of the associated
$q$-linearized polynomials: the product $FG$ acts as $F\circ G$. Congruences
are defined by right remainders, so the relevant quotients are initially left
module quotients.
As shown in the next proposition, the polynomials $Q_i$ and $\Pi$ are
central; hence the ideals they generate are two-sided and the quotients below
are rings. By left division of $N$ by $v$ we mean
$N=v\widehat u+\Delta$ with
$\deg_\tau(\Delta)<\deg_\tau(v)$: the divisor $v$ is on the left and the
quotient $\widehat u$ is on the right.

\begin{proposition}
The elements $Q_1,\ldots,Q_\ell$ are central and pairwise coprime, and
$$
\Pi=\operatorname{lclm}(Q_1,\ldots,Q_\ell),
\qquad
\deg_\tau(\Pi)=\sum_i\deg_\tau(Q_i).
$$
Thus $\Gamma$ is $\Pi$-independent in the terminology of
\cite[Definition~4]{NR26}. Moreover,
restriction to $\mathfrak q_i$-torsion induces an isomorphism
$$
\Theta_i:R/(Q_i)
\xrightarrow{\sim}
\End_{A/(\mathfrak q_i)}\bigl(\phi[\mathfrak q_i]\bigr)
\cong
\Mat_{r\times r}(\vF_{q^m}).
$$
Set
$$
\mathcal R_\Gamma\coloneqq\prod_{i=1}^{\ell}R/(Q_i)
$$
and, by the skew CRT theorem \cite[Theorem~1]{NR26}, denote the resulting
isomorphism by
$$
\operatorname{CRT}_\Gamma:R/(\Pi)\xrightarrow{\sim}\mathcal R_\Gamma,
\qquad
u\longmapsto\bigl(\operatorname{rem}_r(u,Q_i)\bigr)_i.
$$
We suppress residue-class notation in the arguments of
$\operatorname{CRT}_\Gamma$ and identify each class in $R/(Q_i)$ with its
unique right-remainder representative of degree less than $mr$.
Following \cite[Definition~5]{NR26}, for $1\leq K\leq\mathsf N$ let
$$
\operatorname{SCRT}_{\Gamma,K}
\coloneqq
\left\{\operatorname{CRT}_\Gamma(u):\deg_\tau(u)<K\right\}.
$$
This is an $\vF_{q^r}$-linear code of dimension $K$. Define the blockwise
matrix map
$$
\Theta:\mathcal R_\Gamma
\xrightarrow{\sim}
\Mat_{r\times r}(\vF_{q^m})^\ell,
\qquad
(y_i)_i\longmapsto\bigl(\Theta_i(y_i)\bigr)_i.
$$
Then
$$
\Theta\bigl(\operatorname{SCRT}_{\Gamma,mt}\bigr)
=
\mathcal C^{\mathrm{MSRD}}_{s,\mathfrak q_1,\ldots,\mathfrak q_\ell}.
$$
\end{proposition}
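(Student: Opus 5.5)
Centrality comes first. Since $\tau^r$ commutes with every $\alpha\in\vF_{q^r}$ (because $\alpha^{q^r}=\alpha$) and with $\tau$, it is central in $R$. Hence every polynomial in $\tau^r$ with $\vF_q$-coefficients is central, in particular each $Q_i=\mathfrak q_i(\tau^r)$ and $\Pi$. Equivalently, $Q_i=\phi_{\mathfrak q_i}$ for the module $\phi_T=\tau^r$, so the map $a\mapsto a(\tau^r)$ is just $\phi$ and lands in the center.

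Pairwise coprimality follows from Bezout in $A$. Since $\mathfrak q_i\neq\mathfrak q_j$ are distinct monic irreducibles, there are $a,b\in A$ with $a\mathfrak q_i+b\mathfrak q_j=1$. Applying $\phi$ gives $\phi_a Q_i+\phi_b Q_j=1$ in $R$, so $\operatorname{gcrd}(Q_i,Q_j)=1$. The same argument applied to $\mathfrak q_i$ and $\prod_{j\ne i}\mathfrak q_j$ shows that $Q_i$ is right-coprime to the product of the others. Because the $Q_i$ are central, the lclm of $Q_i$ and a central $G$ coprime to it is $Q_iG$: it is a common left multiple, and the degree formula with trivial gcrd gives degree $\deg Q_i+\deg G$. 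Induction on $\ell$ then yields $\Pi=\operatorname{lclm}(Q_1,\ldots,Q_\ell)$ with $\deg_\tau\Pi=\sum_i mr=m\ell r$, which is the $\Pi$-independence condition of \cite[Definition~4]{NR26}. This invokes the skew CRT theorem \cite[Theorem~1]{NR26}, so $\operatorname{CRT}_\Gamma$ is an isomorphism. It is also directly visible that both sides have $\vF_q$-dimension $r\cdot m\ell r$ and the kernel is $R\Pi$.

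For $\Theta_i$, restriction $\rho_i$ is a ring homomorphism $R=\End(\phi)\to\End_{A/(\mathfrak q_i)}(\phi[\mathfrak q_i])$. Its image lies in the $A/(\mathfrak q_i)$-linear maps because elements of $R$ commute with $\phi_a$. It kills $Q_i=\phi_{\mathfrak q_i}$ by definition of torsion, so it factors through $R/(Q_i)$. Both sides have $\vF_q$-dimension $r\cdot mr=mr^2$, using the remainder basis $\{\alpha\tau^j: j<mr\}$ on the left and $\Mat_{r\times r}(\vF_{q^m})$ on the right. So it suffices to prove injectivity, i.e.\ that any $u$ of degree $<mr$ vanishing on $\phi[\mathfrak q_i]$ is zero. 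This is immediate from Lemma~\ref{lem:torsion-root-count} with $\ell=1$, or directly because $\phi[\mathfrak q_i]$ has $q^{mr}$ elements and $u$ has at most $q^{\deg u}$ roots. I expect this dimension-count-plus-root-bound step to be the main point, though it is short. Separability is fine since $\mathfrak q_i\neq T$.

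The final identity is bookkeeping. The $\vF_{q^r}$-dimension $K$ of $\operatorname{SCRT}_{\Gamma,K}$ follows from injectivity of $\operatorname{CRT}_\Gamma$ on $\{u:\deg u<K\}$, since $K\le\mathsf N$. For $u$ with $\deg_\tau u<mt$, i.e.\ $u\in\mathcal M_s$, the $i$-th component is $\rho_i(\operatorname{rem}_r(u,Q_i))$. Since $u-\operatorname{rem}_r(u,Q_i)\in RQ_i$ acts as zero on $\phi[\mathfrak q_i]$, this equals $u|_{\phi[\mathfrak q_i]}$. Thus $\Theta\circ\operatorname{CRT}_\Gamma$ maps $\mathcal M_s$ onto exactly the tuples defining $\mathcal C^{\mathrm{MSRD}}_{s,\mathfrak q_1,\ldots,\mathfrak q_\ell}$, which gives the equality $\Theta(\operatorname{SCRT}_{\Gamma,mt})=\mathcal C^{\mathrm{MSRD}}_{s,\mathfrak q_1,\ldots,\mathfrak q_\ell}$.
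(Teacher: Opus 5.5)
Your proposal is correct and follows the same route as the paper: centrality of $\tau^r$, coprimality transported from $A$ via $T\mapsto\tau^r$, injectivity of $\Theta_i$ from the root-space bound (a right remainder of degree less than $mr$ that vanishes on the $q^{mr}$ points of $\phi[\mathfrak q_i]$ must be zero), surjectivity by the $\vF_q$-dimension count $mr^2$, and the final identity from $\mathcal M_s=\{u\in R:\deg_\tau(u)<mt\}$. You also make explicit several steps the paper treats tersely: the Bezout identity $\phi_aQ_i+\phi_bQ_j=1$, the inductive $\operatorname{lclm}$ computation for central coprime factors, the $\vF_{q^r}$-dimension of $\operatorname{SCRT}_{\Gamma,K}$, and the compatibility $\Theta_i(\operatorname{rem}_r(u,Q_i))=u|_{\phi[\mathfrak q_i]}$.
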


\begin{proof}
Since $\alpha\mapsto\alpha^q$ has order $r$ on $\vF_{q^r}$,
$\tau^r$ is central. Hence $Q_i\in\vF_q[\tau^r]$; the coprimality of the
$\mathfrak q_i$ remains valid after substituting $T=\tau^r$, and their
product is their least common left multiple.

Every element of $R=\End_k(\phi)$ commutes with the $A$-action, so
restriction factors through $R/(Q_i)$. If $u$ vanishes on
$\phi[\mathfrak q_i]$, write by right division
$$
u=hQ_i+w,
\qquad
\deg_\tau(w)<mr.
$$
Since $\mathfrak q_i$ is prime to the $A$-characteristic $(T)$, the torsion
module $\phi[\mathfrak q_i]$ has $q^{mr}$ elements, all of which are roots
of $w$. Since $\deg_\tau(w)<mr$, the root-space bound gives $w=0$. The
induced map is injective and is surjective because both sides have
$\vF_q$-dimension $mr^2$. The product CRT isomorphism and the identity
$\mathcal M_s=M_s(\phi)=\{u\in R:\deg_\tau(u)<mt\}$ give the final identity.
\end{proof}

Here $\vF_{q^r}$-linearity refers to left multiplication in the
skew-polynomial residue model. Under $\Theta$, multiplication by
$\alpha\in\vF_{q^r}$ becomes left composition by the matrices
$\Theta_i(\alpha)$; it need not coincide with entrywise scalar multiplication
over $\vF_{q^m}$.
Thus the corresponding matrix code is asserted to be $\vF_q$-additive, while
the $\vF_{q^r}$-vector-space structure is intrinsic to its skew CRT model.

The encoding process can therefore be read from left to right in the diagram
$$
\{u\in R:\deg_\tau(u)<K\}
\xrightarrow{\ \operatorname{CRT}_\Gamma\ }
\mathcal R_\Gamma
\xrightarrow{\ \Theta\ }
\Mat_{r\times r}(\vF_{q^m})^\ell.
$$
The first map records the skew remainders of $u$ modulo the $Q_i$; the second
map identifies each remainder with the matrix of its action on the
corresponding torsion module. Thus a message is represented by a skew
polynomial of degree less than $K$, while $\mathsf N-K$ is the unused degree
range that supplies redundancy.

\subsection{The Metric Correspondence and the Singleton Bound}

For $\boldsymbol y=(y_i)_i\in\mathcal R_\Gamma$,
let $Y$ be its CRT lift of degree less than $\mathsf N$. Its poly-skew weight
is
$$
\wt_{\mathrm{ps}}(\boldsymbol y)
\coloneqq
\mathsf N-\deg_\tau\operatorname{gcrd}(\Pi,Y),
$$
as in \cite[Definition~6]{NR26}.

The quantity $\deg_\tau\operatorname{gcrd}(\Pi,Y)$ measures how much of the
torsion space $\ker(\Pi)$ is also annihilated by $Y$. Accordingly, the
poly-skew weight measures the complementary part on which $Y$ acts
nontrivially. The next theorem makes this interpretation precise by showing
that it is exactly $m$ times the sum of the matrix ranks of the residue
blocks.

\begin{theorem}
\label{thm:poly-skew-sum-rank-isometry}
For every $\boldsymbol y\in\mathcal R_\Gamma$,
$$
\wt_{\mathrm{ps}}(\boldsymbol y)
=
m\wt_{SR}\bigl(\Theta(\boldsymbol y)\bigr).
$$
Thus $\Theta$ is an isometry after rescaling the poly-skew metric by $1/m$.
\end{theorem}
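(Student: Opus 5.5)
The plan is to interpret $\deg_\tau\operatorname{gcrd}(\Pi,Y)$ as the $\vF_q$-dimension of a common root space, and then to split that root space along the torsion decomposition $\phi[P]=\bigoplus_i\phi[\mathfrak q_i]$. Here $Y$ is the CRT lift of $\boldsymbol y$.

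\emph{Step 1: degree of a right divisor of $\Pi$ equals the dimension of its root space.} The first step is to show that for every monic right divisor $G$ of $\Pi$ one has $\ker(G)\subseteq\ker(\Pi)=\phi[P]$ and $\dim_{\vF_q}\ker(G)=\deg_\tau(G)$.
\begin{itemize}
\item The inclusion holds because right divisibility $\Pi=BG$ means $\Pi$ acts as $B\circ G$.
\item Since $\Pi=P(\tau^r)=\phi_P$ is separable (its $\tau^0$-coefficient is $P(0)\neq0$, as $P$ is prime to $T$), we have $\dim_{\vF_q}\ker(\Pi)=\mathsf N$.
\item The linear term of $G$ is then also nonzero, so $G$ is separable with exactly $q^{\deg_\tau G}$ roots.
\end{itemize}
Applying this to $G=\operatorname{gcrd}(\Pi,Y)=B_1\Pi+B_2Y$ gives $\ker(G)=\ker(\Pi)\cap\ker(Y)$. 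One inclusion follows from the B\'ezout identity; the other holds because $G$ right-divides both $\Pi$ and $Y$. Hence
$$
\deg_\tau\operatorname{gcrd}(\Pi,Y)=\dim_{\vF_q}\bigl(\phi[P]\cap\ker Y\bigr).
$$
One point needs care: all roots lie in $\overline{\vF}_q$. The maps are $\vF_q$-linear on $\overline{k}$, so root counting in $\overline{k}$ is legitimate.

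\emph{Step 2: decompose along the torsion.} Next, $Y$ commutes with $\phi_{\mathfrak q_j}=Q_j$ because $Q_j$ is central, so $Y$ preserves each summand $\phi[\mathfrak q_i]$. Therefore
$$
\ker(Y)\cap\phi[P]=\bigoplus_i\ker\bigl(Y|_{\phi[\mathfrak q_i]}\bigr).
$$
On $\phi[\mathfrak q_i]$, $Y$ acts as $\operatorname{rem}_r(Y,Q_i)=y_i$, since $Y-y_i\in RQ_i$ kills $\phi[\mathfrak q_i]$. By the definition of $\Theta_i$ in the preceding proposition, this action is represented by the matrix $\Theta_i(y_i)$, an $\vF_{q^m}$-linear endomorphism of $\phi[\mathfrak q_i]\cong\vF_{q^m}^r$. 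Its kernel has $\vF_q$-dimension $m(r-\rk\Theta_i(y_i))$.

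\emph{Step 3: assemble.} Summing over $i$ gives $\deg\operatorname{gcrd}=m\ell r-m\,\wt_{SR}(\Theta(\boldsymbol y))$. Subtracting from $\mathsf N=m\ell r$ yields the claimed identity $\wt_{\mathrm{ps}}(\boldsymbol y)=m\wt_{SR}(\Theta(\boldsymbol y))$.

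The case $Y=0$ needs a separate remark. Here we take the convention $\operatorname{gcrd}(\Pi,0)=\Pi$, which gives weight $0$ on both sides.

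The main obstacle is Step 1, specifically the equality $\deg_\tau G=\dim_{\vF_q}\ker G$ for right divisors of $\Pi$. I would handle it via separability of $\Pi$: a right factor of a separable twisted polynomial has nonzero constant term. This follows by comparing lowest-degree terms in $\Pi=BG$. The root count then follows from the standard fact that a separable additive polynomial of $q$-degree $n$ has exactly $q^n$ roots, forming an $\vF_q$-space.
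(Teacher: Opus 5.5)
Your proposal is correct and follows essentially the same route as the paper: you identify $\deg_\tau\operatorname{gcrd}(\Pi,Y)$ with $\dim_{\vF_q}(\phi[P]\cap\ker Y)$ via the B\'ezout identity and the separability of right factors of $\Pi$ (by comparing constant terms), and then split along $\phi[P]=\bigoplus_i\phi[\mathfrak q_i]$. You also add details the paper leaves implicit: that $Y$ preserves each summand because the $Q_j$ are central, that it acts there as $y_i$, and the convention $\operatorname{gcrd}(\Pi,0)=\Pi$ for $Y=0$.
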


\begin{proof}
Put $D=\operatorname{gcrd}(\Pi,Y)$. Since $D$ divides both $\Pi$ and $Y$ on
the right, one has
$$
\ker(D)\subseteq\ker(\Pi)\cap\ker(Y).
$$
Conversely, a Bezout identity $D=B_1\Pi+B_2Y$ gives the reverse inclusion.
Moreover,
$$
\Pi=P(\tau^r)=\phi_P,
\qquad
\ker(\Pi)=\phi[P].
$$
Consequently,
$$
\ker(D)=\ker(\Pi)\cap\ker(Y)
=
\ker\left(Y|_{\phi[P]}\right).
$$
The constant coefficient of $\Pi$ is nonzero, since
$\mathfrak q_i\neq T$ for every $i$. If $\Pi=HD$, then the constant term of
$\Pi$ is the product of the constant terms of $H$ and $D$; hence the constant
coefficient of $D$ is also nonzero. Thus $D$ is separable and
$$
\deg_\tau(D)
=
\dim_{\vF_q}\ker\left(Y|_{\phi[P]}\right).
$$
Since $\phi[P]=\bigoplus_i\phi[\mathfrak q_i]$ and the action on the $i$-th
summand is represented by $\Theta_i(y_i)$,
$$
\deg_\tau(D)
=
m\sum_{i=1}^{\ell}
\left(r-\rk_{\vF_{q^m}}\bigl(\Theta_i(y_i)\bigr)\right)
=
\mathsf N-m\wt_{SR}\bigl(\Theta(\boldsymbol y)\bigr).
$$
The claim follows from the definition of $\wt_{\mathrm{ps}}$.
\end{proof}

\begin{corollary}
Let $\mathcal D\subseteq\mathcal R_\Gamma$ be a nonzero $\vF_q$-linear code
of dimension $\kappa$ and minimum poly-skew distance $d_{\mathrm{ps}}$.
Then
$$
\kappa
\leq
mr\left(\ell r-\frac{d_{\mathrm{ps}}}{m}+1\right).
$$
If $\mathcal D$ is $\vF_{q^r}$-linear of dimension $K$, then
$$
d_{\mathrm{ps}}
\leq
m\left(\ell r-\left\lceil\frac{K}{m}\right\rceil+1\right).
$$
For the explicit Drinfeld-module code, $K=mt$ and
$$
d_{\mathrm{ps}}\left(\operatorname{SCRT}_{\Gamma,mt}\right)
=
m(\ell r-t+1)
=
\mathsf N-K+m.
$$
\end{corollary}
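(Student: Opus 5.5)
The plan is to transport everything to the sum-rank side through Theorem~\ref{thm:poly-skew-sum-rank-isometry} and then apply the additive sum-rank Singleton bound. First observe that $\Theta$ is $\vF_q$-linear and bijective, so it maps $\mathcal D$ to an $\vF_q$-linear code $\Theta(\mathcal D)\subseteq\Mat_{r\times r}(\vF_{q^m})^\ell$ of the same $\vF_q$-dimension $\kappa$. The poly-skew weight is invariant under translation, since $\wt_{\mathrm{ps}}$ of a difference is computed from the CRT lift of that difference. Hence the theorem gives $d_{\mathrm{ps}}(\mathcal D)=m\,d_{SR}(\Theta(\mathcal D))$. Substituting $d_{SR}=d_{\mathrm{ps}}/m$ into the additive Singleton bound $\kappa\leq mr(\ell r-d_{SR}+1)$ yields the first inequality.

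For the $\vF_{q^r}$-linear case, the $\vF_q$-dimension is $\kappa=rK$. The first bound gives $rK\leq mr(\ell r-d_{\mathrm{ps}}/m+1)$, that is, $d_{\mathrm{ps}}/m\leq \ell r+1-K/m$. Because $d_{\mathrm{ps}}/m=d_{SR}$ is an integer, this integrality upgrades the bound to $d_{\mathrm{ps}}/m\leq \ell r+1-\lceil K/m\rceil$. Multiplying by $m$ gives the stated inequality. This rounding step is the only point that needs care. It relies on $d_{\mathrm{ps}}$ being a multiple of $m$, which follows from the isometry, since without it the ceiling would not be justified.

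For the explicit code, the preceding proposition shows that $\Theta(\operatorname{SCRT}_{\Gamma,mt})=\mathcal C^{\mathrm{MSRD}}_{s,\mathfrak q_1,\ldots,\mathfrak q_\ell}$. By Theorem~\ref{thm:explicit-msrd-characteristic-t}, this code has sum-rank distance $\ell r-t+1$. The isometry then gives $d_{\mathrm{ps}}=m(\ell r-t+1)$. Finally, since $\mathsf N=m\ell r$ and $K=mt$, we have $m(\ell r-t+1)=\mathsf N-K+m$, and this also shows that the second bound is attained with $\lceil K/m\rceil=t$.
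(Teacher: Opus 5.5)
Your proposal is correct and matches the paper's proof step for step: you transport $\mathcal D$ through the $\vF_q$-linear bijection $\Theta$, apply the additive sum-rank Singleton bound via $\wt_{\mathrm{ps}}=m\wt_{SR}$, use $\kappa=rK$ and the integrality of $d_{\mathrm{ps}}/m$ to justify the ceiling, and obtain the explicit distance from $\Theta(\operatorname{SCRT}_{\Gamma,mt})=\mathcal C^{\mathrm{MSRD}}_{s,\mathfrak q_1,\ldots,\mathfrak q_\ell}$ together with the exact MSRD theorem. One phrasing to tidy is ``invariant under translation'': what you actually use is that $\Theta$ is additive and that, for a linear code, the minimum distance equals the minimum nonzero weight.
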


\begin{proof}
Apply the additive sum-rank Singleton bound to $\Theta(\mathcal D)$ and use
Theorem~\ref{thm:poly-skew-sum-rank-isometry}. If $\mathcal D$ is
$\vF_{q^r}$-linear, then $\kappa=rK$. Moreover, the isometry shows that
$d_{\mathrm{ps}}/m$ is an integer. Dividing the additive bound by $r$ and
rounding therefore gives
$$
\frac{d_{\mathrm{ps}}}{m}
\leq
\ell r-\left\lceil\frac{K}{m}\right\rceil+1.
$$
For the explicit code,
Theorem~\ref{thm:explicit-msrd-characteristic-t} gives
$d_{SR}=\ell r-t+1$.
\end{proof}

For $K=mt$, the central support explains why the exact distance is
$\mathsf N-K+m$, rather than merely the general skew CRT designed-distance
lower bound $\mathsf N-K+1$ from \cite[Proposition~5]{NR26}.

\subsection{A Skew CRT Decoder for the Explicit
Characteristic-\texorpdfstring{$(T)$}{(T)} Codes}

The key equation is the skew CRT analogue of Welch-Berlekamp reconstruction
\cite{Gab85,Loi04,Loi06}. It specializes the reconstruction mechanism of
\cite[Propositions~6 and~7]{NR26} to the present central support. Once the
maps $\Theta_i$ are fixed, a received matrix tuple is converted to skew
residues, lifted by CRT, and decoded entirely in $R$.

Under the identifications $\Theta_i$, the general filter equations become
particularly concrete. For a received matrix tuple
$\boldsymbol Y=(Y_1,\ldots,Y_\ell)$, set
$\boldsymbol y=(y_1,\ldots,y_\ell)
\coloneqq\Theta^{-1}(\boldsymbol Y)$ and let $Y$ be the unique CRT lift of
$\boldsymbol y$ satisfying $\deg_\tau(Y)<\mathsf N$. Then
$$
\rho_i(v)Y_i=\rho_i(N)
$$
is equivalent to
$$
\operatorname{rem}_r(vy_i,Q_i)
=
\operatorname{rem}_r(N,Q_i).
$$
Since $Y\equiv y_i\pmod{Q_i}$ for every $i$, all the block equations are
equivalent, by the skew CRT theorem, to the single congruence
$$
vY\equiv N\pmod{\Pi}.
$$
Thus the decoder below is precisely the characteristic-$(T)$ realization of
Algorithm~\ref{alg:general-filter-decoder}. Throughout, bold uppercase
letters denote matrix tuples, bold lowercase letters the corresponding tuples
of skew residues, and plain uppercase letters their CRT lifts. We retain the
symbols $v$, $N$, and $\epsilon$ from the general decoder; here $N$ denotes
the filtered numerator, whereas $\mathsf N$ denotes the total skew degree. In
particular, if $E$ denotes the CRT lift associated with the error tuple
$\boldsymbol E$, then
$$
Y\equiv u+E\pmod{\Pi}.
$$
The error filter $v$ is chosen so that $vE$ is a multiple of $\Pi$. It then
satisfies
$$
vY\equiv vu\pmod{\Pi}.
$$
As in the general decoder, the key equation replaces the unknown product
$vu$ by a second unknown $N$. The degree constraints below guarantee that no
spurious nonzero solution can occur within the decoding radius.

\begin{theorem}
\label{thm:drinfeld-skew-crt-decoder}
Let $\epsilon\in\mathbb Z_{\geq 0}$ satisfy
$2\epsilon+t\leq\ell r$. Suppose that
$\boldsymbol Y=\boldsymbol C+\boldsymbol E$ is received, where
$\boldsymbol C$ is encoded by $u\in R$ with
$\deg_\tau(u)<K=mt$ and
$\wt_{SR}(\boldsymbol E)\leq\epsilon$. Let $Y$ be the CRT lift of
$\Theta^{-1}(\boldsymbol Y)$, with $\deg_\tau(Y)<\mathsf N$. The homogeneous
key equation
$$
vY\equiv N\pmod{\Pi},
\qquad
\deg_\tau(v)\leq m\epsilon,
\qquad
\deg_\tau(N)\leq m\epsilon+K-1,
$$
has a nonzero solution, and every nonzero solution satisfies $N=vu$. In
particular, $v\neq0$ and left division of $N$ by $v$ recovers $u$.
\end{theorem}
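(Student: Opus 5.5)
The plan is to run the argument of Proposition~\ref{prop:general-filter-equation} in the skew CRT model, with $\lambda=m\epsilon$ and $s=K-1=mt-1$. The only added work is to translate the congruence modulo $\Pi$ into the blockwise filter equations and back. By centrality of each $Q_i$ and the CRT isomorphism, $vY\equiv N\pmod{\Pi}$ holds if and only if $\operatorname{rem}_r(vy_i,Q_i)=\operatorname{rem}_r(N,Q_i)$ for every $i$. Applying the ring isomorphisms $\Theta_i$, this is equivalent to $\rho_i(v)Y_i=\rho_i(N)$, using $\rho_i=\Theta_i\circ(\text{reduction mod }Q_i)$ on $R=\End(\phi)$. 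Moreover, the degree constraints say exactly that $v\in M_{m\epsilon}(\phi)$ and $N\in M_{m\epsilon+K-1}(\phi)$, since $M_s(\phi)=\{u\in R:\deg_\tau(u)\leq s\}$ in characteristic $(T)$.

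\emph{Existence.} Let $U_i=\operatorname{Im}(E_i)\subseteq\phi[\mathfrak q_i]$, so that $\sum_i\dim_{\vF_{q^m}}U_i=\wt_{SR}(\boldsymbol E)\leq\epsilon$. The $\vF_q$-linear map
$$
v\mapsto\bigl(\rho_i(v)|_{U_i}\bigr)_i,
\qquad
M_{m\epsilon}(\phi)\longrightarrow\bigoplus_i\Hom_{\vF_{q^m}}\bigl(U_i,\phi[\mathfrak q_i]\bigr),
$$
has a source of dimension $r(m\epsilon+1)$ and a target of dimension at most $mr\epsilon$. Hence it has a nonzero kernel element $v$. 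Such a $v$ satisfies $\rho_i(v)E_i=0$ for all $i$, so $\rho_i(v)Y_i=\rho_i(vu)$. Since $\deg_\tau(vu)\leq m\epsilon+K-1$, the pair $(v,vu)$ is a nonzero solution.

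\emph{Uniqueness.} Take any nonzero solution $(v,N)$ and put $Z=N-vu$. For $x\in\ker(E_i)$ we have $Y_ix=\rho_i(u)x$, so the $i$-th block equation yields $Z(x)=0$. Thus $Z$ vanishes on $\bigoplus_i\ker(E_i)\subseteq\phi[P]$, which is an $\vF_q$-space of dimension at least $m(\ell r-\epsilon)$. On the other hand,
$$
\deg_\tau(Z)\leq m\epsilon+mt-1<m(\ell r-\epsilon),
$$
where the strict inequality uses $2\epsilon+t\leq\ell r$. The root-space bound therefore forces $Z=0$, i.e.\ $N=vu$. Since $(v,N)\neq(0,0)$, this gives $v\neq0$. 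Finally, left division of $N=vu$ by $v$ has zero remainder, and the quotient is $u$ by the uniqueness of Euclidean division in $R$.

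The main obstacle is making the translation between the single congruence and the blockwise equations airtight. Three points need care: the residue of $vY$ modulo $Q_i$ depends only on $y_i$ (this uses that $(Q_i)$ is two-sided, as $Q_i$ is central); $\Theta_i$ is multiplicative (restriction to torsion is composition); and $\Theta_i$ agrees with $\rho_i$. Everything else is the root count already used in Lemma~\ref{lem:torsion-root-count}.
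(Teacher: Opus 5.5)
Your proof is correct, but it follows a different route from the paper's. You first convert the congruence $vY\equiv N\pmod{\Pi}$ into the blockwise equations $\rho_i(v)Y_i=\rho_i(N)$, using the CRT and the ring isomorphisms $\Theta_i$. The paper states this equivalence just before the theorem but does not use it in the proof. You then rerun Proposition~\ref{prop:general-filter-equation} with $\lambda=m\epsilon$ and $s=K-1$: existence comes from a kernel count in $M_{m\epsilon}(\phi)$, and uniqueness from the root-space bound on $\bigoplus_i\ker(E_i)$.

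The paper instead stays entirely inside $R$. For existence it takes the explicit filter $v_0$ with $v_0E=\operatorname{lclm}(E,\Pi)$. Its degree is the poly-skew weight of the error, hence at most $m\epsilon$ by Theorem~\ref{thm:poly-skew-sum-rank-isometry}. For uniqueness it writes $Z=vE+W\Pi$, so that $\operatorname{gcrd}(\Pi,E)$ right-divides $Z$, and obtains a contradiction from the degree of $\operatorname{lclm}(Z,\Pi)$.

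The two uniqueness arguments are really the same inequality $\deg_\tau(Z)\geq\mathsf N-m\epsilon$, seen from two sides. The paper phrases it through right divisors of $\Pi$, you through kernels on $\phi[P]$, and the two agree by the proof of the isometry theorem.

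The paper's route produces a canonical error-locator of degree exactly $m\wt_{SR}(\boldsymbol E)$ and keeps the decoder inside the skew CRT formalism of \cite{NR26}. Your route needs neither the gcrd/lclm calculus nor the isometry theorem. It also makes explicit that the statement is just the characteristic-$(T)$ case of the general filter proposition. The price is a non-constructive existence step, which is harmless here since the theorem only asserts existence of a solution.
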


\begin{proof}
Let $E$ be the CRT lift of $\Theta^{-1}(\boldsymbol E)$. By
Theorem~\ref{thm:poly-skew-sum-rank-isometry},
$$
\wt_{\mathrm{ps}}\bigl(\Theta^{-1}(\boldsymbol E)\bigr)
\leq m\epsilon.
$$
If $E=0$, take $(v,N)=(1,u)$. Otherwise choose $v_0$ so that
$v_0E=\operatorname{lclm}(E,\Pi)$. The degree formula gives
$$
\deg_\tau(v_0)
=
\wt_{\mathrm{ps}}\bigl(\Theta^{-1}(\boldsymbol E)\bigr)
\leq m\epsilon,
$$
and $(v_0,v_0u)$ is a nonzero solution satisfying the degree constraints.

For any nonzero solution put $Z=N-vu$. Since
$Y\equiv u+E\pmod{\Pi}$ and $vY\equiv N\pmod{\Pi}$, there is $W\in R$
such that
$$
Z=vE+W\Pi.
$$
Any common right divisor of $\Pi$ and $E$ then also divides $Z$ on the
right. Hence
$$
\wt_{\mathrm{ps}}\bigl(\operatorname{CRT}_\Gamma(Z)\bigr)
\leq
\wt_{\mathrm{ps}}\bigl(\operatorname{CRT}_\Gamma(E)\bigr)
\leq m\epsilon.
$$
If $Z\neq0$, choose $U$ with
$UZ=\operatorname{lclm}(Z,\Pi)$. The degree formula and the preceding weight
bound give
$$
\deg_\tau(U)
=
\mathsf N-\deg_\tau\operatorname{gcrd}(\Pi,Z)
=
\wt_{\mathrm{ps}}\bigl(\operatorname{CRT}_\Gamma(Z)\bigr)
\leq m\epsilon.
$$
Furthermore, the degree constraints and $\deg_\tau(u)<K$ give
$$
\deg_\tau(Z)
\leq
m\epsilon+K-1.
$$
Therefore
$$
\deg_\tau(UZ)
\leq
2m\epsilon+K-1
\leq
\mathsf N-1.
$$
But $UZ$ is a nonzero left multiple of $\Pi$ in the domain $R$, so its
degree is at least $\mathsf N$, a contradiction. Thus $Z=0$ and $N=vu$.
If $v=0$, then $N\equiv0\pmod{\Pi}$. Moreover,
$$
\deg_\tau(N)
\leq m\epsilon+K-1
=
m(\epsilon+t)-1
\leq
\mathsf N-1.
$$
Hence $N=0$, contradicting nonzeroness.
\end{proof}

The preceding theorem gives the following bounded-distance decoder.

\begin{algorithm}
\label{alg:explicit-skew-crt-decoder}
Fix $\epsilon\in\mathbb Z_{\geq 0}$ with
$2\epsilon+t\leq\ell r$.

\emph{Input:} a received word
$\boldsymbol Y\in\Mat_{r\times r}(\vF_{q^m})^\ell$.
\emph{Output:} a skew-polynomial message $\widehat u$ of degree less than
$K$ whose codeword is at sum-rank distance at most $\epsilon$ from
$\boldsymbol Y$, or failure.

\begin{enumerate}
\item \emph{CRT-lifting step.} Compute
$\boldsymbol y=\Theta^{-1}(\boldsymbol Y)$ and its unique CRT lift $Y$ of
degree less than $\mathsf N$.

\item \emph{Filter step.} Find a nonzero solution $(v,N)$ of
$$
vY\equiv N\pmod{\Pi},
\qquad
\deg_\tau(v)\leq m\epsilon,
\qquad
\deg_\tau(N)\leq m\epsilon+K-1.
$$
Declare failure if none exists or if $v=0$.

\item \emph{Message-recovery step.} Divide $N$ with $v$ on the left,
obtaining
$$
N=v\widehat u+\Delta,
\qquad
\deg_\tau(\Delta)<\deg_\tau(v).
$$
Declare failure if $\Delta\neq0$ or
$\deg_\tau(\widehat u)\geq K$.

\item \emph{Verification step.} Declare failure if
$$
\wt_{SR}\left(
\boldsymbol Y-
\Theta\bigl(\operatorname{CRT}_\Gamma(\widehat u)\bigr)
\right)>\epsilon;
$$
otherwise return $\widehat u$.
\end{enumerate}
\end{algorithm}

\begin{corollary}
Algorithm~\ref{alg:explicit-skew-crt-decoder} corrects every sum-rank error
of weight at most
$$
\left\lfloor\frac{\ell r-t}{2}\right\rfloor.
$$
After the CRT data and the maps $\Theta_i$ have been precomputed, its
key-equation step costs $O(\mathsf N^\omega)$ operations over
$\vF_{q^r}$, where $\omega$ denotes the exponent of matrix multiplication.
\end{corollary}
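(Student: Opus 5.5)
The plan has two parts: correctness up to the radius $\epsilon=\lfloor(\ell r-t)/2\rfloor$, and the cost of the key-equation step.

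\textbf{Correctness.} With this choice of $\epsilon$ we have $2\epsilon+t\leq\ell r$, so Theorem~\ref{thm:drinfeld-skew-crt-decoder} applies to every received word $\boldsymbol Y=\boldsymbol C+\boldsymbol E$ with $\wt_{SR}(\boldsymbol E)\leq\epsilon$. The argument runs through the four steps in order.

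\begin{enumerate}
\item Step~1 produces the correct lift $Y\equiv u+E\pmod{\Pi}$.
\item The theorem guarantees that the filter step finds a nonzero solution, and that any nonzero solution satisfies $v\neq0$ and $N=vu$.
\item Left division of $N$ by $v$ in the domain $R$ then returns $\widehat u=u$ with $\Delta=0$, by uniqueness of the quotient and remainder. Since $\deg_\tau(u)<K$, no failure is declared.
\item The verification step computes exactly $\wt_{SR}(\boldsymbol E)\leq\epsilon$, using the identification $\Theta(\operatorname{CRT}_\Gamma(u))=\boldsymbol C$ from the skew CRT realization proposition. Hence the algorithm returns $u$.
\end{enumerate}

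Uniqueness of the decoded message also follows from Proposition~\ref{prop:sum-rank-channel-correction}: Theorem~\ref{thm:explicit-msrd-characteristic-t} gives $d_{SR}=\ell r-t+1>2\epsilon$.

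\textbf{Complexity.} The main work is to set the key equation up as a linear system over $\vF_{q^r}$ with $O(\mathsf N)$ unknowns and equations. We write $v=\sum_{j\leq m\epsilon}v_j\tau^j$ with unknown coefficients $v_j$.

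The coefficients multiply $\tau^j$ on the left, and $v_j\tau^jY=v_j(\tau^jY)$. So $\operatorname{rem}_r(vY,\Pi)=\sum_j v_j\operatorname{rem}_r(\tau^jY,\Pi)$, because $\Pi$ is central and right remainder modulo $\Pi$ is left $R$-linear. In particular it is left $\vF_{q^r}$-linear in the $v_j$.

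The congruence $vY\equiv N\pmod{\Pi}$ with $\deg_\tau(N)<\mathsf N$ becomes the requirement that $N$ equal this remainder. The degree bound on $N$ then says that the coefficients of $\operatorname{rem}_r(vY,\Pi)$ in degrees $m\epsilon+K,\ldots,\mathsf N-1$ vanish.

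\textbf{Main obstacle: linearity over $\vF_{q^r}$.} The coefficient of $\tau^k$ in $v_j\cdot(\text{polynomial})$ is $v_j$ times a known element. This holds because the scalar sits on the left and is never Frobenius-twisted. Thus the conditions form a homogeneous $\vF_{q^r}$-linear system of size at most $\mathsf N\times\mathsf N$ in the unknowns $v_j$. Its nonzero solutions correspond to the nonzero solutions $(v,N)$ of the key equation with $v\neq 0$. The $v=0$ case is excluded by the theorem, since $N$ then vanishes.

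\textbf{Cost accounting.} We precompute the remainders $\operatorname{rem}_r(\tau^jY,\Pi)$ iteratively. Each step multiplies by $\tau$ and reduces once modulo the monic central $\Pi$, costing $O(\mathsf N)$ operations; there are at most $\mathsf N$ steps, for $O(\mathsf N^2)$ in total. Finding a kernel vector of the system costs $O(\mathsf N^\omega)$ by fast linear algebra. The division and verification steps are polynomial and dominated by this, given the precomputed $\Theta_i$ and CRT data. This yields the $O(\mathsf N^\omega)$ bound for the key-equation step.
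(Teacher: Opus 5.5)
Your proposal is correct and follows the paper's proof. The radius comes from Theorem~\ref{thm:drinfeld-skew-crt-decoder}, since $\epsilon=\lfloor(\ell r-t)/2\rfloor$ satisfies $2\epsilon+t\leq\ell r$, and the cost is that of solving one homogeneous $\vF_{q^r}$-linear system of size $O(\mathsf N)$.

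The only difference is in how the system is written. You eliminate $N=\operatorname{rem}_r(vY,\Pi)$ and solve for the $m\epsilon+1$ coefficients of $v$ alone, whereas the paper keeps both $v$ and $N$ as unknowns, giving $\mathsf N$ equations in $2m\epsilon+K+1\leq\mathsf N+1$ unknowns; either way the bound is $O(\mathsf N^\omega)$. Your elimination needs only left $\vF_{q^r}$-linearity of $\operatorname{rem}_r(\cdot,\Pi)$, which holds because scalars do not raise degree. Strictly, the map is not left $R$-linear on remainder representatives, as your own reduction after each multiplication by $\tau$ reflects.
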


\begin{proof}
The radius follows from Theorem~\ref{thm:drinfeld-skew-crt-decoder}. The key
equation has $\mathsf N$ scalar equations and $2m\epsilon+K+1$ unknowns over
$\vF_{q^r}$. Since $2m\epsilon+K\leq\mathsf N$, it has at most
$\mathsf N+1$ columns, and standard linear algebra gives the stated
complexity, consistently with \cite[Proposition~8]{NR26}. The displayed
$O(\mathsf N^\omega)$ bound concerns the online dense linear-algebra step.
The construction of the torsion bases, the maps $\Theta_i$, and the CRT data
is precomputation; with effective finite-field representations fixed, the
coordinate conversions and Ore-polynomial divisions are polynomial-time
operations.
\end{proof}

\section{Cryptographic Outlook}

\begin{remark}
The Drinfeld-module construction provides a natural separation between a
public code and its efficient decoding description.  One may publish only an
$\vF_q$-generator matrix of a sum-rank-isometric image of the code, while
retaining the Drinfeld module, the torsion primes and bases, and the
restriction or CRT data as a private decoding key.  This is precisely the type
of hidden algebraic description sought in McEliece-type cryptosystems.

Classical proposals based directly on Reed-Solomon, Gabidulin, and
linearized Reed-Solomon codes face structural attacks exploiting,
respectively, Schur products, Frobenius closures, and their sum-rank analogues;
in several regimes these methods recover enough defining data to reconstruct
an efficient decoder \cite{BartzEtAl22,HBH23}.  In the present construction,
the private arithmetic data are not published as evaluation locators or as a
Moore matrix, so these recovery procedures do not immediately apply in the
same form. Especially for general supersingular Drinfeld modules, the
choice of the module and of several torsion levels provides a rich family of
possible hidden descriptions.  Moreover, recognizing that the public code is
structured would not by itself reveal the private Drinfeld/CRT realization or
the corresponding decoder.

These observations are heuristic and do not constitute a security proof.

This leads to a natural Drinfeld/CRT reconstruction problem: starting only
from a public generator matrix, recover an equivalent Drinfeld module,
torsion and restriction data, or any other efficient decoder.  If this
reconstruction problem is hard, the hidden arithmetic realization offers a
genuine potential advantage over direct Gabidulin- or linearized
Reed-Solomon-based choices.  The MSRD property and the decoder developed
above further make these codes natural candidates for secure multishot
constructions analogous to \cite{MPK19}, once suitable nested or dual
families are selected.  Thus Drinfeld-module codes provide a promising
framework for both public-key and network-coding-oriented cryptographic
constructions.
\end{remark}

\section*{Acknowledgments}

\noindent G. Micheli was supported by NSF CAREER grant 2338424.

\noindent M. Papikian was supported in part by the Simons Foundation,
award number MPS-TSM-00008093.

\medskip

\noindent\emph{Use of generative AI.}
During the preparation of this manuscript, OpenAI's ChatGPT was used as an
editorial aid for linguistic and expository revision, notational consistency,
and LaTeX formatting. All mathematical ideas, results, and proofs are entirely
the authors' own work.

\bibliographystyle{plain}
\bibliography{references}
\end{document}